\numberwithin{equation}{section}
\numberwithin{figure}{section}
\renewcommand*{\thefootnote}{\fnsymbol{footnote}}
\title{Malliavin differentiability of McKean-Vlasov SDEs with common noise}
\author{
Jianhai Bao\footnote{Center for Applied Mathematics, Tianjin University, 300072 Tianjin, P.R.~China.
{\tt jianhaibao@tju.edu.cn}
}
\and
  Gon{\c c}alo~dos~Reis\footnote{University of Edinburgh, School of Mathematics, Edinburgh, EH9 3FD, UK and 
Center for Mathematics and Applications (NOVA Math),  UNL, PT.  
{\tt G.dosReis@ed.ac.uk}
    }\, 
    \footnote{G.~dos Reis acknowledges support from the FCT – Fundação para a Ciência e a Tecnologia, I.P., under the scope of the projects UIDB/00297/2020 (https://doi.org/10.54499/UIDB/00297/2020) and UIDP/00297/2020 (https://doi.org/10.54499/UIDP/00297/2020) (Center for Mathematics and Applications, NOVA Math), and by the UK Research and Innovation (UKRI) under the UK government’s Horizon Europe funding Guarantee [Project UKRI343].}
  \and 
  Zac Wilde
  \footnote{University of Edinburgh, School of Mathematics, Edinburgh, EH9 3FD, UK. 
{\tt zwilde@ed.ac.uk}  
           }
        }
\date{%
    \footnotesize 
    \longdate \today 
}
\theoremstyle{plain}
\newtheorem{theorem}{Theorem}[section]
\newtheorem{lemma}[theorem]{Lemma}
\newtheorem{definition}[theorem]{Definition}
\newtheorem{remark}[theorem]{Remark}
\newcommand{\bD}{\mathbb{D}}
\newcommand{\bE}{\mathbb{E}}
\newcommand{\bN}{\mathbb{N}}
\newcommand{\bP}{\mathbb{P}}
\newcommand{\bR}{\mathbb{R}}
\newcommand{\cC}{\mathcal{C}}
\newcommand{\cF}{\mathcal{F}}
\newcommand{\cP}{\mathcal{P}}
\newcommand{\trace}{\textrm{Trace}}
\newcommand{\Supp}{\textrm{Supp}}
\newcommand{\Law}{\textrm{Law}}
\newcommand{\dd}{\mathrm{d}}
\begin{document}

\selectlanguage{english}

\maketitle
\renewcommand*{\thefootnote}{\arabic{footnote}}

\begin{abstract} 
We establish the Malliavin differentiability of McKean-Vlasov stochastic differential equations (MV-SDEs) with common noise under the global  Lipschitz assumption in the space variable and the measure variable. 
Our result gives also meaning to the Malliavin derivative of the conditional law with respect to the common noise. 
As an application, we derive an integration by parts formula on the Wiener space for the class of common noise MV-SDEs under consideration.  
\end{abstract}
{\bf Keywords:} 
McKean-Vlasov stochastic differential equation; 
common noise; 
conditional measure flow; 
Lions derivative;
Malliavin calculus;
integration by parts formula





%
%
%

\section{Introduction}

McKean-Vlasov stochastic differential equations (SDEs for short) are a class of SDEs whose coefficients have dependence not only on the state process but also on 
the law of the solution 
process \cite{book:CarmonaDelarue2018a}. 
In the present work, we turn to  focus on McKean-Vlasov SDEs with common noise, which, in literature,  are also termed  as the conditional McKean-Vlasov SDEs \cite{LSZ}. As far as such SDEs are concerned, there are two sources of noise input into the system, in which one is  an idiosyncratic source of randomness and the other is a so-called common noise (i.e., the path $(\omega^0_t)_{t\ge0}$ driven by the Brownian motion which we condition upon pathwise). More precisely, in this paper we    work on  the following dynamics on $\mathbb R^d$: 
\begin{align}
\label{Eq:General MVSDE}
\dd X_t = b(t,X_t,\mu_t)\dd t +\sigma^0(t,X_t,\mu_t) \dd W^{0}_t
+\sigma^1(t,X_t,\mu_t) \dd W^{1}_t, \quad t>0; 
\quad X_{0} =\xi,
\end{align}
where 
 $b:[0,\infty) \times \bR^d \times\cP(\bR^d) \to \bR^d$, $\sigma^0 :[0,\infty) \times \bR^d \times \cP(\bR^d) \to \bR^{d\otimes m_0}$,  and $\sigma^1 :[0,\infty) \times \bR^d \times \cP_2(\bR^d) \to \bR^{d\otimes m}$, 
the Brownian motion $(W_t^1)_{t\ge0}$, supported on $(\Omega^1,\mathcal F^1,(\mathcal F_t^1)_{t\ge0}, \mathbb P^1)$,  is referred to as an $m$-dimensional idiosyncratic noise, the Brownian motion $(W_t^0)_{t\ge0}$, carried on $(\Omega^0,\mathcal F^0,(\mathcal F_t^0)_{t\ge0}, \mathbb P^0)$,
as an $m_0$-dimensional common noise, 
and $\mu_t:= \mu_t(\omega^0)= \Law(X_t(\omega^0,\cdot))$ for $\bP^0$-almost   $\omega^0$ (i.e., $\mu_t=\bP^1\circ (\bE\big[X_t|\cF^0_t \big])^{-1}$).
Hereinbefore, $\cP(\bR^d)$ is the space of probability measures over $\bR^d$, and  
$\mu_t $ can be understood as a random variable mapping $(\Omega^0,\cF^0,\bP^0)$ to $\cP(\bR^d)$; see \cite[Section 4.3]{book:CarmonaDelarue2018b} or \cite{platonovdosreis2023ItowentzelLions} for related  details. 

As mentioned previously, the drift and diffusion coefficients in \eqref{Eq:General MVSDE} depend on the path $(\omega^0_t)_{t\ge0}$. Thus, one can interpret \eqref{Eq:General MVSDE} in the light of the theory on rough differential equations (RDEs for short).  
One of our contributions lies in extending \cite{bugini2024malliavin}, which addressed    the Malliavin differentiability of standard RDEs,   to the mean-field setting, where   the rough path under consideration  is a Brownian motion path. 
According to \cite[p.110-112]{book:CarmonaDelarue2018b}, the random distribution flow $(\mu_t)_{t>0}$ corresponding to  \eqref{Eq:General MVSDE} solves the following SPDE:
\begin{equation*} 
\dd\mu_t= \Big(-\mbox{div}\big( b(\cdot,\mu_t)\mu_t \big)
+\frac{1}{2}\mbox{trace}\big(\nabla^2\big((\sigma^1(\sigma^1)^\top)(\cdot,\mu_t)\mu_t\big)\big)\Big)\,\dd t
-\mbox{div}\big(\big(\sigma^0(\cdot,\mu_t)\dd W_t^0\big)\mu_t\big),
\end{equation*}
where $(\sigma^1)^\top$ denotes the transpose of $\sigma^1$, $\nabla^2$ means the second-order gradient operator, and $\mbox{div}$
represents the divergence operator. The aforementioned  SPDE, which is also named as the nonlinear stochastic Fokker-Planck equation, is understood  in the weak sense. 
 It is worthy to stress that the flow $(\mu_t)$ involved in \eqref{Eq:General MVSDE}  is random whereas the same counterpart associated with the standard McKean-Vlasov SDEs (without common noise) is deterministic. 
This discrepancy introduces involved  difficulties into the analysis of the Malliavin differentiability for $(X_t)_{t\ge0}$ governed  by  \eqref{Eq:General MVSDE}.

In the past few years, the Malliavin differentiability of classical SDEs advanced greatly. In particular, via a Picard approximation approach, \cite[Theorem 2.2.1]{nualart2006malliavin}
handled the case that  the drift term and the diffusion term involved are globally Lipschitz.  Nevertheless,  the Malliavin 
differentiability for SDEs with drifts of super-linear growth is left open for a long time, which is extremely unexpected. In \cite{imkeller2018differentiability}, the issue mentioned above on the Malliavin 
 differentiability is addressed by the aid of stochastic Gâteaux differentiability and ray absolute continuity. Recently,  the research on the Malliavin 
 differentiability for McKean-Vlasov SDEs 
has also made some progress; see, for instance, 
\cite[Proposition 3.5]{RW} for the globally Lipschitz setting. Since the distribution  variable involved in the coefficients associated with McKean-Vlasov SDEs is deterministic,  
there is no additional term in the linear SDE solved by the Malliavin derivative; see, e.g., \cite[(3.26)]{RW} or \cite[(2)]{reis2023malliavin}. When the coefficients of McKean-Vlasov SDEs under consideration is Lipschitz continuous in the measure variable  under the $L^2$-Wasserstein distance, 
the Malliavin differentiability was  established in \cite[Theorem 3.5]{reis2023malliavin} and \cite[Proposition 4.8]{reis2023malliavin} via the Picard iteration trick and the interacting
particle system (IPS for short) limits
when the underlying drifts satisfy the one-sided Lipschitz condition and fulfil the globally Lipschitz condition in the spatial variable, respectively.

\medskip

\emph{Our contribution: Malliavin differentiability of  McKean-Vlasov  SDEs with common noise under the global Lipschitz assumption.} Specially, 
we extend the classical Malliavin variational results to   McKean-Vlasov SDEs with common noise, where the deterministic coefficients satisfy  a global space-measure Lipschitz condition. To derive the Malliavin differentiability with respect to the common noise, we adopt a similar approach to that of \cite{reis2023malliavin} by appealing to the celebrated \cite[Lemma 1.2.3]{nualart2006malliavin}. In detail, we employ the following strategy: we first 
 study the Malliavin differentiability of the underlying  IPS  followed by passing to the limit via Propagation of Chaos (PoC for short), and then studying how the Malliavin regularity transfers across the particle limit to the limiting equation. Essentially from \eqref{Eq:General MVSDE}, our construction allows one to give meaning to the Malliavin derivative of the conditional distribution flow $\mu_t $ in relation to $W^0$;  see Theorem \ref{MD0} below.

With regarding to the McKean-Vlasov SDEs, the Bismut-Elworthy-Li formulae were provided   in \cite{Crisan2018} for the associated decoupled SDEs with deterministic initial value. Later on,  the Bismut formula for Lions derivatives  was initially established in \cite{RW} for distribution dependent SDEs, which the initial value is a random variable. Subsequently, \cite{RW} was extended to distribution-path dependent SDEs, where the associated drift term  satisfies a monotone condition in the spatial variable. In the present paper, as an application of our main theory derived,  we seek  to establish a conditional integration by parts formula (instead of the corresponding Bismut-Elworthy-Li formula, where the order between the Markov operator and the $L$-derivative is different).

\section{Notation and preliminary results}
\subsection{Notation and spaces}  
For $M,N\in\bR^{m\otimes n}$,   define the inner product $\langle M,N\rangle_{{\rm HS}} = \trace(M^\top N)$, which induces the associated Hilbert-Schmidt norm $\|\cdot\|_{\rm HS}$.  
For $g:\bR^m\to\bR^n$, write $g(x) = (g^1(x),\ldots,g^n(x))$, and define $\nabla_x g$ as the Jacobian matrix 
$(\partial_ig^j)_{i,j}$ with $\partial_i:=\frac{\partial}{\partial x^i}$ for $1\leq i \leq m$, $1\leq j\leq n.$  
For $t>0$ and  a  vector-valued or matrix-valued function $f$ defined on $\bR^d$, we set the uniform $\|f\|_{0,t}:=\sup_{0\le s\le t}\|f(s)\|_{\rm HS}$. For a given terminal $T>0,$   write  $\mathcal{C}_T:=C([0,T];\mathbb R^d)$, which is endowed with the uniform norm $\|\cdot\|_{\mathcal C_T}$, as the path space of continuous functions $\omega:[0,T]\rightarrow \mathbb R^d$. 
Throughout the paper, we shall work on the stochastic basis $(\Omega, \mathcal F, (\mathcal F_t)_{t\ge0},\mathbb P)$, where $\Omega=\Omega^0\times \Omega^1$, $(\mathcal F,\mathbb P)$ is the completion of $(\mathcal F^0\times \mathcal F^1, \mathbb P^0\times\mathbb P^1)$, and $(\mathcal F_t)_{t\ge0}$
is the complete and right continuous augmentation of $(\mathcal F_t^0\times\mathcal F_t^1)_{t\ge0}$.  
Let $\bE^0$, $\bE^1$ and $\bE$ denote the expectation operators under the probability measures  $\bP^0$,  $\bP^1$, and  $\bP$, respectively.
 The space of probability measures over $\bR^d$ with finite $r$th moment for $r\ge1$, written as  $\cP_r(\bR^d)$,  is a Polish space  under the Wasserstein distance: 
\begin{align*}
\mathcal W_r(\mu,\nu) = \inf_{\pi\in\Pi(\mu,\nu)} \Big(\int_{\bR^d\times \bR^d} |x-y|^r\pi(\dd x,\dd y)\Big)^\frac1r, \quad \mu,\nu\in \cP_r(\bR^d) ,
\end{align*}   
where $\Pi(\mu,\nu)$ (i.e., the family of probability measures on $\mathbb R^d\times\mathbb R^d$) is the set of couplings for $\mu$ and $\nu$ (that is, $\pi\in\Pi(\mu,\nu)$  satisfying  $\pi(\cdot\times \bR^d)=\mu(\cdot)$ and $\pi(\bR^d \times \cdot)=\nu(\cdot)$). 
Let $\Supp(\mu)$ denote the support of the probability measure $\mu$.


\subsection{Basics on Malliavin calculus}
In this subsection, we aim to recall some basics on Malliavin calculus to make the content self-contained; see e.g. the monographs  \cite{nualart2006malliavin} and \cite[pages 8-9]{W2013}. For $T>0$ fixed, denote by $\mathcal H$ the Cameron-Martin space defined as below:
$$ 
\mathcal H=\Big\{h\in\mathcal C_T\Big|h_0={\bf 0}, h_t' \mbox{ exists a.e. in }t, \|h\|_{\mathcal H}^2:=\int_0^T|h_t'|^2\dd t<\infty\Big\},
$$ 
which is a Hilbert space with the inner product $\langle f,g\rangle_{\mathcal H}:=\int_0^T\langle f_t',g_t'\rangle\dd t$.
Let $\mu_T$ be the distribution of $W_{[0,T]}=(W_t)_{t\in[0,T]}$ on the path space $\mathcal C_T$ of paths starting at the origin, where $(W_t)_{t\ge0}$ is a $d$-dimensional Brownian motion. In fact, $\mu_T$
is the so-called Wiener measure so that the coordinate process $W_t(\omega)=\omega_t$ is a $d$-dimensional Brownian motion. For $F\in L^2(\mathcal C_T,\mu_T)$, $F(W_{[0,T]})$ is called Malliavin differentiable along direction $h\in\mathcal H$ if the directional derivative 
$
D_hF(W_{[0,T]}): =\lim_{\varepsilon\rightarrow 0} \frac{1}{\varepsilon}  (F(W_{[0,T]}+\varepsilon h)-F(W_{[0,T]}))
$ 
exists in $L^2(\mathcal C_T,\mu_T)$. If $\mathcal H\ni h\mapsto D_hF\in L^2(\mathcal C_T,\mu_T)$ is bounded, via the Riesz representation theorem,  then there exists a unique $DF(W_{[0,T]})\in L^2(\mathcal C_T\rightarrow \mathcal H,\mu_T)$ such that $\langle DF(W_{[0,T]}),h\rangle =D_hF(W_{[0,T]}) $ holds true  $\mu_T$-a.s. In this case, we write $F(W_{[0,T]})\in\mathcal D(D)$ (the domain of $D$)
and call $DF(W_{[0,T]})$ the Malliavin gradient (derivative) of $F(W_{[0,T]})$. The adjoint operator $(\delta, \mathcal D(\delta))$ of $(D,\mathcal D(D))$ is called the Malliavin divergence. Via the integration by parts formula, we have 
\begin{align*}
  \mathbb E(D_hF)=\mathbb E(F\delta(f)) ,\quad F\in \mathcal D(D),\, h\in \mathcal D(\delta).
\end{align*}
In particular, when $h\in L^2(\mathcal C_T\rightarrow \mathcal H, \mu_T)$ is an adapted stochastic process, then $h\in\mathcal D(\delta)$ and $\delta(h)=\int_0^T\langle h_t',\dd W_t\rangle.$ Otherwise, $\delta(h)$
is the Skorokhod stochastic integral.

\subsection{Lions differentiability and the empirical projection map}
\label{sec:MeasureDerivatives}
To begin, we recall the notion of the Lions derivative; see e.g. \cite[Definition 1.1]{RW}, which indeed coincides with the   one given in \cite{book:CarmonaDelarue2018a} via a lift-up approach.
 The functional   $f:\mathcal P_2(\mathbb R^d)\rightarrow\mathbb R$ is called $L$-differentiable at $\mu\in\mathcal P_2(\mathbb R^d) $ if the functional $L^2(\mathbb R^d\rightarrow\mathbb R^d,\mu)\ni\phi\mapsto f(\mu\circ(I+\phi)^{-1})$ is Fr\'{e}chet differentiable at ${\bf 0}\in L^2(\mathbb R^d\rightarrow\mathbb R^d,\mu)$, where $I$ means the identity map and $\mu\circ(I+\phi)^{-1}$ is the push-forward measure of $\mu$ by  $I+\phi$. 
Namely, there is (hence, unique) $\gamma\in 
L^2(\mathbb R^d\rightarrow\mathbb R^d,\mu) $ such that
\begin{align*}
\lim_{\mu(|\phi|^2)\rightarrow0}\frac{f(\mu\circ(I+\phi)^{-1})-f(\mu)-\mu(\langle\gamma,\phi\rangle)}{\sqrt{\mu(|\phi|^2)}}=0.
\end{align*}
In this case, we write $\partial_\mu f(\mu)=\gamma$ and call it the  $L$-derivative of $f$ at $\mu$. For a vector-valued or matrix-valued function $f$ on $\mathbb R^d$, the $L$-derivative (if it exists) is defined in the component-wise way. 
If the $L$-derivative $\partial_\mu f(\mu)$  exists for all $\mu\in \mathcal P_2(\mathbb R^d)$, 
then $f$ is called $L$-differentiable. If, moreover, for any $\mu\in\mathcal P_2(\mathbb R^d)$  there exists a $\mu$-version $\partial_\mu f(\mu)$ such that $\partial_\mu f(\mu)(x)$ is jointly continuous in $(x,\mu)\in\mathbb R^d\times\mathcal P_2(\mathbb R^d)$, we write $f\in C^{(1,0)}(P_2(\mathbb R^d))$. $g$ is called differentiable on $\mathbb R^d\times\mathcal P_2(\mathbb R^d)$ if, for any $(x,\mu)\in \mathbb R^d\times\mathcal P_2(\mathbb R^d)$, $g(\cdot,\mu)$ is differentiable at $x$ and $g(x,\cdot)$ is $L$-differentiable at $\mu$. If moreover, $\nabla g(\cdot,\mu)(x)$ and $\partial_\mu g(x,\cdot)(\mu)(y)$ are jointly continuous in $(x,y,\mu)\in\mathbb R^d\times\mathbb R^d\times\mathcal P_2(\mathbb R^d)$, where $\nabla$ is the gradient operator on $\mathbb R^d$, we write $g\in C^{1,(1,0)}(\mathbb R^d\times \mathcal P_2(\mathbb R^d))$. In addition, for $g\in C^{1,(1,0)}(\mathbb R^d\times \mathcal P_2(\mathbb R^d))$,  we write $g\in C_b^{1,(1,0)}(\mathbb R^d\times \mathcal P_2(\mathbb R^d))$ when the  gradient  and the $L$-derivative are uniformly bounded in all variables.

Below, we retrospect the definition on the empirical project of a map; see, for instance, \cite[Definition 5.34]{book:CarmonaDelarue2018a}.

\begin{definition}
Given $u: \cP_2(\bR^d) \to \bR^d$ and $N\in \bN$,  the empirical projection map $u^N$ of $u$ is defined  via the following relation: 
\begin{align*}
    u^N :(\mathbb R^d)^N\ni {\bf x}^N:=(x^1,\ldots, x^N)\mapsto u   \big(\bar{\mu}^N\big)
\quad  \text{with}\quad 
\bar{\mu}^N (\dd y) := \frac{1}{N}\sum\limits_{i=1}^N \delta_{x^i}\, (\dd y). 
\end{align*} 
\end{definition}

Let $u: \cP_2(\bR^d) \to \bR^d$ be a continuously $L$-differentiable map and $u^N$ be the associated empirical projection map.  Then, according to   \cite[Proposition 5.35]{book:CarmonaDelarue2018a}, 
 $u^N$ is differentiable in $(\bR^d)^N$ and satisfies  the identity:
\begin{align}
\label{prop:DerivativeRelations-Space-2-Lions}
    (\nabla_j
    u^N)({\bf x}^N) &= N^{-1}  \partial_\mu u
    (\bar \mu^N)
    (x^j),\quad {\bf x}^N\in(\bR^d)^N,
\end{align}
 where   $\nabla_j u$ denotes the gradient in the $j$-th component $x^j$.


\subsection{McKean-Vlasov SDEs with common noise and associated IPS} 
Throughout the paper,  we assume the following Assumptions.
\begin{enumerate}
\item[$({\bf H}_1)$]there is a constant   $L>0$ such that for   $  t \in[0,T]$, $  x, y\in \bR^d$ and $  \mu, \nu \in \mathcal{P}_2(\bR^d)$, 
\begin{align*}
 |b(t,  x, \mu) - b(t , y, \nu)|\vee\|\sigma^0(t,  x, \mu) - \sigma^0(t , y, \nu)\|_{\rm HS}
 & \vee \|\sigma^1(t,  x, \mu) - \sigma^1(t , y, \nu)\|_{\rm HS}\\
& \leq L\big( |x-y| +W_2(\mu, \nu)\big).\end{align*}

\item[$({\bf H}_2)$]$b(t,\cdot,\cdot),\sigma^0(t,\cdot,\cdot),\sigma^1(t,\cdot,\cdot)\in C_b^{1,(1,0)}(\mathbb R^d)\times \mathcal P_2(\mathbb R^d)$ for all $t\in[0,T]$.
 
\end{enumerate}

It is standard that \eqref{Eq:General MVSDE} is strongly well-posed and has a uniform second-order moment bound in a finite-time interval; see, for example, \cite[Proposition 2.8] {book:CarmonaDelarue2018b} and \cite[Proposition 2.2]{biswas2024radomization} for more details. 
\begin{theorem} 
 \label{existenceuniqueness}
    Under  $({\bf H}_1)$, there exists a unique solution to \eqref{Eq:General MVSDE} in $ L^2(\mathcal C_T;\mathbb P)$ for a finite horizon $T>0$, and   there is a constant $K=K(L,T)>0$ 
    such that 
    $ \bE \|X\|^2_{\mathcal C_T}\leq K\big(1+\bE|\xi|^2\big) $   as long as the initial value $X_0=\xi\in L^2(\Omega,\mathcal F_0,\mathbb P;\mathbb R^d)$. 
\end{theorem}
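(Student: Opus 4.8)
The plan is to prove both the well-posedness and the moment estimate by a Picard iteration carried out on the coupled pair (state process, conditional law flow), following the classical McKean--Vlasov scheme adapted to the common-noise framework as in \cite[Proposition 2.8]{book:CarmonaDelarue2018b} and \cite[Proposition 2.2]{biswas2024radomization}. \textbf{Step 1 (the iteration).} Set $X^0_t\equiv\xi$; given an $\mathbb F$-adapted continuous $X^n\in L^2(\mathcal C_T;\mathbb P)$, put $\mu^n_t:=\mathbb P^1\circ(X^n_t(\cdot))^{-1}$, which for $\mathbb P^0$-a.e.\ $\omega^0$ is an $\mathcal F^0_t$-measurable, $\mathcal P_2(\mathbb R^d)$-valued random variable with $\mathcal P_2$-continuous paths, and let $X^{n+1}$ be the unique strong solution of the SDE with initial datum $\xi$ and the \emph{fixed}, $\mathbb F$-progressively measurable, globally Lipschitz coefficients $(s,x)\mapsto (b,\sigma^0,\sigma^1)(s,x,\mu^n_s)$; existence, uniqueness and square-integrability of $X^{n+1}$ then follow from the classical theory of SDEs with random Lipschitz coefficients.

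\textbf{Step 2 (uniform moment bound).} Using that $(\mathbf H_1)$ forces linear growth of the coefficients, together with Burkholder--Davis--Gundy and the elementary bound $\mathcal W_2(\mu^n_s,\delta_0)^2\le\mathbb E^1[|X^n_s|^2]$, one obtains
\[
\mathbb E\|X^{n+1}\|_{\mathcal C_t}^2\le C\big(1+\mathbb E|\xi|^2\big)+C\int_0^t\mathbb E\|X^{n+1}\|_{\mathcal C_s}^2\,\dd s+C\int_0^t\mathbb E\|X^{n}\|_{\mathcal C_s}^2\,\dd s,\qquad t\in[0,T],
\]
and an induction on $n$ combined with Gr\"onwall's inequality yields $\sup_n\mathbb E\|X^n\|_{\mathcal C_T}^2\le K(1+\mathbb E|\xi|^2)$ for some $K=K(L,T)$; this is the claimed moment bound, once the limit is identified.

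\textbf{Step 3 (contraction, limit, uniqueness).} With $\Delta^n:=X^{n+1}-X^n$ and the conditional estimate $\mathcal W_2(\mu^n_s,\mu^{n-1}_s)^2\le\mathbb E^1[|\Delta^{n-1}_s|^2]$ ($\mathbb P^0$-a.s.), the same BDG/Gr\"onwall computation gives $g_n(t)\le C\int_0^t g_{n-1}(s)\,\dd s$ with $g_n(t):=\mathbb E\|\Delta^n\|_{\mathcal C_t}^2$, hence $g_n(T)\le (CT)^n g_0(T)/n!$ and $g_0(T)=\mathbb E\|X^1-\xi\|_{\mathcal C_T}^2<\infty$; thus $(X^n)$ is Cauchy in $L^2(\mathcal C_T;\mathbb P)$, with limit $X$. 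Then $\mathbb E^0[\mathcal W_2(\mu^n_t,\mu_t)^2]\to0$ for $\mu_t:=\mathbb P^1\circ(X_t(\cdot))^{-1}$, and Lipschitz continuity of $b,\sigma^0,\sigma^1$ lets one pass to the limit in the defining equation, so $X$ solves \eqref{Eq:General MVSDE}; the moment bound transfers to $X$ by Fatou's lemma. For uniqueness, two solutions $X,Y$ with conditional laws $\mu,\nu$ satisfy $\mathcal W_2(\mu_s,\nu_s)^2\le\mathbb E^1[|X_s-Y_s|^2]$, and the same estimate gives $\mathbb E\|X-Y\|_{\mathcal C_t}^2\le C\int_0^t\mathbb E\|X-Y\|_{\mathcal C_s}^2\,\dd s$, so $X=Y$ by Gr\"onwall.

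The BDG/Gr\"onwall estimates above are routine; the step requiring genuine (if classical) care is the measurable dependence $\omega^0\mapsto\mu^n_t(\omega^0)$ of the conditional law on the common noise, and the attendant verification that the frozen-measure equations in Step~1 are bona fide SDEs with $\mathbb F$-adapted coefficients, i.e.\ the bookkeeping forced by the product structure $\Omega=\Omega^0\times\Omega^1$ and by the conditioning on $\mathcal F^0$.
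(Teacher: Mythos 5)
Your Picard iteration on the pair (state process, conditional law flow) is correct and is essentially the argument behind the references the paper itself cites in lieu of a proof (\cite[Proposition 2.8]{book:CarmonaDelarue2018b}, \cite[Proposition 2.2]{biswas2024radomization}); the paper gives no independent proof of this theorem. The only tacit ingredient, shared by the paper, is that $({\bf H}_1)$ yields linear growth only once one also assumes $t\mapsto (b,\sigma^0,\sigma^1)(t,0,\delta_0)$ is bounded on $[0,T]$, and you rightly flag the measurability/adaptedness of $\omega^0\mapsto\mu^n_t(\omega^0)$ as the one point needing classical care.
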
  

In the sequel, 
we copy the dynamics of $(X_t)_{t\ge0}$ solving \eqref{Eq:General MVSDE} to form a \textit{non-interacting particle system (non-IPS for short)}  written as  below: for any $ i=1,\ldots,N$,  
\begin{align}
\label{nonIPS}
    \dd X_t^i 
    = 
    b(t,X_t^i,\mu_t^i) \dd t 
    + 
    \sigma^{0}(t,X_t^i,\mu_t^i) \dd W_t^{0} 
    + 
    \sigma^{1}(t,X_t^i,\mu_t^i) \dd W^{1,i}_t,\quad t>0;  \quad X^i_0 = \xi^i,
\end{align}
where $(\xi^{i})_{1\le i\le N}$ are $L^2(\Omega,\mathcal F_0,\mathbb P;\mathbb R^d)$ i.i.d.~random variables with the same distribution as $\xi$ (all are independent), $((W_t^{1,i})_{t\ge0})_{1\le i\le N}$  are mutually independent Brownian motion on $(\Omega^1,\cF^1, (\cF^1)_{t\ge0}, \bP^1)$. 
In terms of \cite[Proposition 2.11]{book:CarmonaDelarue2018b}, one has 
$\mu_t^i := \Law(X_t^i(\omega^0,\cdot))=\mu^i_t$ for $\bP^0$-almost any $\omega^0$, and meanwhile $\mu_t^i=\mu_t$ for any $t\ge0$, where $(\mu_t)_{t\ge0}$ is determined by \eqref{Eq:General MVSDE}. 
The dynamics of a corresponding  IPS  to \eqref{nonIPS} is formulated  as follows:  
\begin{equation}\label{IPS}
\begin{split}
    \dd X_t^{i,N} = &b(t,X_t^{i,N},\bar{\mu}_t^N) \dd t + \sigma^{0}(t,X_t^{i,N},\bar{\mu}_t^N) \dd W_t^{0}+ \sigma^{1}(t,X_t^{i,N},\bar{\mu}_t^N) \dd W^{1,i}_t, 
\end{split}
\end{equation}
for any $ i=1,\ldots,N$, where  $X^{i,N}_0 := \xi^i$, and the empirical measure $\bar{\mu}_t^N:=\frac{1}{N}\sum_{j=1}^N\delta_{X_t^{j,N}}$. 

The following Lemma plays a vital role in analysing the Malliavin differentiability for McKean-Vlasov SDEs with common noise. 
\begin{lemma} 
\label{wassemp}
Under   $({\bf H}_1)$, 
the IPS \eqref{IPS} has a unique solution $((X_t^{i,N})_{0\le t\le T })_{1\le i\le N}$ and there exists a constant $K=K(T,L)>0$  such that
 $\max_{1\leq l\leq N} \bE  \|X^l\|^2_{\mathcal C_T}\leq K(1+ \bE|\xi|^2) $   in case of $\xi\in L^2(\Omega,\mathcal F_0,\mathbb P;\mathbb R^d)$.
Moreover, for $((X^{i,N}_t)_{0\le t\le N})_{1\le i  \le N}$ the solution to \eqref{nonIPS},  
 \begin{align}
     \label{conditionalpoc}
   \lim_{N\to\infty}\bigg( \max_{1\leq i\leq N}  \bE  \big\|X^{i,N}_\cdot-X_\cdot^i\big\|^2_{\mathcal C_T}+\sup_{0\le t\le T}\bE\mathcal W_2(\bar\mu^N_t,\mu_t^1)^2\bigg)= 0. 
    \end{align}  
\end{lemma}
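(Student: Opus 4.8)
The plan is to treat the three claims of the Lemma in sequence, each building on the previous one, using only Gronwall-type estimates, the Lipschitz hypothesis $({\bf H}_1)$, standard moment bounds for SDEs, and the classical propagation-of-chaos machinery.

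\emph{Step 1: Well-posedness and uniform moment bound for the IPS \eqref{IPS}.} First I would view \eqref{IPS} as a single $\bR^{dN}$-valued SDE driven by the $(m_0 + mN)$-dimensional Brownian motion $(W^0, W^{1,1},\ldots,W^{1,N})$ with drift and diffusion coefficients built from $b,\sigma^0,\sigma^1$ and the empirical-measure map ${\bf x}^N \mapsto \bar\mu^N$. The key observation is that, by $({\bf H}_1)$ and the elementary bound $\mathcal W_2(\bar\mu^N_{\bf x},\bar\mu^N_{\bf y})^2 \le \frac1N\sum_{j=1}^N|x^j - y^j|^2$, the aggregated coefficients are globally Lipschitz on $\bR^{dN}$ (with constant depending on $L$ but \emph{not} on $N$, once one measures on the normalized norm $N^{-1/2}|\cdot|$). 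Hence classical SDE theory (e.g.\ the fixed-point argument underlying \cite[Theorem 2.2.1]{nualart2006malliavin} or the references \cite{book:CarmonaDelarue2018b} already cited) gives existence and uniqueness in $L^2(\mathcal C_T;\bP)$. For the moment bound, I would apply Itô's formula to $|X^{i,N}_t|^2$, use the Lipschitz-implies-linear-growth bound $|b(t,x,\mu)| \le C(1 + |x| + \mathcal W_2(\mu,\delta_0))$ and likewise for $\sigma^0,\sigma^1$, note $\mathcal W_2(\bar\mu^N_t,\delta_0)^2 = \frac1N\sum_j|X^{j,N}_t|^2$, sum over $i$, take $\bE\sup_{s\le t}$, handle the martingale terms by Burkholder--Davis--Gundy, and close with Gronwall; this yields $\max_{1\le l\le N}\bE\|X^l\|^2_{\mathcal C_T} \le K(1 + \bE|\xi|^2)$ with $K = K(T,L)$ independent of $N$. (Here the normalized-norm viewpoint is exactly what makes $K$ dimension-free.)

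\emph{Step 2: Comparison of the IPS \eqref{IPS} with the non-IPS \eqref{nonIPS}.} Since $\mu^i_t = \mu_t$ for all $i$, equation \eqref{nonIPS} is literally $N$ independent copies of \eqref{Eq:General MVSDE}, and the quantity $\bar\mu^N_t$ in \eqref{IPS} should be compared not only with $\mu_t$ but, as an intermediate device, with the empirical measure $\wm^N_t := \frac1N\sum_{j=1}^N\delta_{X^j_t}$ of the non-IPS. Writing $\Delta^i_t := X^{i,N}_t - X^i_t$, I would subtract the two equations, apply Itô to $|\Delta^i_t|^2$, and use $({\bf H}_1)$ to bound the increments by $L(|\Delta^i_s| + \mathcal W_2(\bar\mu^N_s,\mu_s))$; then split $\mathcal W_2(\bar\mu^N_s,\mu_s) \le \mathcal W_2(\bar\mu^N_s,\wm^N_s) + \mathcal W_2(\wm^N_s,\mu_s) \le \big(\frac1N\sum_j|\Delta^j_s|^2\big)^{1/2} + \mathcal W_2(\wm^N_s,\mu_s)$. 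Averaging over $i$, taking $\bE\sup_{s\le t}$, using BDG on the stochastic terms and Gronwall, one arrives at
\begin{align*}
\max_{1\le i\le N}\bE\|X^{i,N}_\cdot - X^i_\cdot\|^2_{\mathcal C_T} \;\le\; K \, \sup_{0\le t\le T}\bE\,\mathcal W_2(\wm^N_t,\mu_t)^2,
\end{align*}
with $K = K(T,L)$. The right-hand side is a \emph{conditional} mean-field PoC term: conditionally on $\cF^0$, the $(X^i)_i$ are i.i.d.\ with common law $\mu_t$, so $\mathcal W_2(\wm^N_t,\mu_t)^2$ is exactly the empirical-measure approximation error for an i.i.d.\ sample, whose expectation tends to $0$ uniformly in $t \in [0,T]$ by the uniform-in-time integrability furnished by Step 1 together with the standard quantitative bounds of Fournier--Guillin (or the soft argument via separability of $\cP_2$ plus Glivenko--Cantelli and uniform integrability). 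This gives the first term in \eqref{conditionalpoc}; combined with the triangle-inequality split above, $\sup_{t\le T}\bE\mathcal W_2(\bar\mu^N_t,\mu^1_t)^2 \le 2\max_i\bE\|\Delta^i\|^2_{\mathcal C_T} + 2\sup_t\bE\mathcal W_2(\wm^N_t,\mu_t)^2 \to 0$ as well, yielding the second term.

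\emph{Main obstacle.} The genuinely delicate point is establishing that $\sup_{0\le t\le T}\bE\,\mathcal W_2(\wm^N_t,\mu_t)^2 \to 0$ \emph{uniformly} in $t$ in the present common-noise setting: although conditionally on $\cF^0$ the particles $X^i$ are i.i.d., the common randomness $\omega^0$ means $\mu_t = \mu_t(\omega^0)$ is itself random, so one must verify that the i.i.d.\ empirical-convergence rate can be integrated over $\omega^0$ — which is where the $\cF^0$-conditioned moment bounds from Step 1 and a dominated-convergence argument are needed — and that the bound is genuinely uniform on $[0,T]$ rather than merely pointwise, which requires either the continuity in $t$ of $t\mapsto\bE\mathcal W_2(\wm^N_t,\mu_t)^2$ plus a pointwise rate, or directly the Fournier--Guillin estimate applied with the uniform second-moment bound $\sup_{t\le T}\bE|X^1_t|^2 < \infty$ (or a slightly higher moment, obtainable the same way from $({\bf H}_1)$ if one wants the explicit rate). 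Everything else is a routine Gronwall-plus-BDG exercise.
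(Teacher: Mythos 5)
Your proposal is correct and follows essentially the same route as the paper, whose own proof simply delegates the two claims to references: well-posedness and the uniform moment bound via the lifted global-Lipschitz coefficients on $(\bR^d)^N$ (your Step 1, cf.\ the cited Lemma 4.4 of dos Reis--Wilde), and the conditional propagation of chaos \eqref{conditionalpoc} via the Sznitman-type coupling through the intermediate empirical measure of the non-IPS (your Step 2, cf.\ the cited Theorem 2.12 of Carmona--Delarue, Vol.~II). The only small inaccuracy is your parenthetical remark that a higher moment is ``obtainable the same way from $({\bf H}_1)$'': this would additionally require $\xi$ to have moments beyond $L^2$, which is not assumed here, so one must use the qualitative uniform-integrability/Glivenko--Cantelli route you correctly identify as the primary option (this is precisely the point of the remark following the lemma in the paper).
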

\begin{proof}
Note that \eqref{IPS} can be rewritten in the state space
$(\mathbb R^d)^N$. Correspondingly, 
under $({\bf H}_1)$, the lift-up drift term and the diffusion term  satisfy the global Lipschitz condition; see e.g. \cite[Lemma 4.4]{reis2023malliavin}. Therefore, it is more or less standard that \eqref{IPS}
is strongly well-posed and the uniform second-order moment estimate is available.  
The assertion  \eqref{conditionalpoc} has been established in   \cite[Theorem 2.12]{book:CarmonaDelarue2018b} so we herein omit the details.  
\end{proof}

\begin{remark}
  Concerning  the validity of 
\cite[Proposition 2.5] {biswas2024radomization}, the higher-order (i.e., $p>4$) moment of the initial distribution is necessitated. Nevertheless, for our purpose, we merely require a qualitative (rather than quantitative) estimate \eqref{conditionalpoc} so the requirement on the higher-order moment on the initial distribution is necessary.   
\end{remark}

\section{Malliavin differentiability}
In this section, we aim at showing that the Malliavin differentiability of $(X_t)_{t\in[0,T]}$ with respect to the common noise and the idiosyncratic noise, one by one.

\begin{theorem}
\label{MD0}
Assume that  $({\bf H}_1)$ and $({\bf H}_2)$ hold. 
 Then, the Malliavin derivative of $(X_t)_{t\ge0}$ solving \eqref{Eq:General MVSDE}  with respect to the common Brownian motion $W^0$, written as $D^0X$,  satisfies the SDE: for all $0\le s \leq t\le T$ 
  \begin{equation}
 \label{commonnoisemalliavin}
    \begin{split}
            D_s^0X_t  &= \sigma^0
        (s,X_s,\mu_s)\\
        &\quad+ \int_s^t\big( \nabla b(r,X_r,\mu_r)D_s^0X_r
        \bE^1\big[\partial_\mu b (r,z,\mu_r)(X_r)D_s^0X_r
        \big ]\big|_{z=X_r}\big) \dd r 
        \\
        &
        \quad+ \int_s^t \big(\nabla \sigma^0(r,X_r,\mu_r)D_s^0X_r
        \bE^1
        \big[\partial_\mu \sigma^0(r,z,\mu_r)
        (X_r)D_s^0X_r\big]\big|_{z=X_r} \big)\dd W_r^{0}  
        \\
        &
        \quad+  \int_s^t\big(\nabla \sigma^1 (r,X_r,\mu_r)D_s^0X_r 
        +\bE^1\big[\partial_\mu \sigma^1  (r,z,\mu_r)    (X_r)D_s^0X_r\big]\big|_{z=X_r}\big)\dd W_r^{1}.
        \end{split} 
        \end{equation} 
 where 
 $D^0_s X_t = {\bf0}_{d\times m_0}$ for $s>t$. 
 Moreover, there exists a unique solution to \eqref{commonnoisemalliavin} and  a positive constant $K=K(T,L)$ such that for any 
\begin{align}
\label{eq:General2ndMomentEstimatefor3.1}
 \sup_{0\le s\le T}
\bE\big[ \big\|D^0_s X_\cdot\big\|^2_{\mathcal C_T}\big] \leq K(1+\bE[|\xi|^2]).
\end{align} 
\end{theorem}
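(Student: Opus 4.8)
The plan is to follow the three-step strategy announced in the introduction: establish Malliavin differentiability at the level of the interacting particle system \eqref{IPS}, where all coefficients are finite-dimensional and globally Lipschitz, then pass to the limit using the propagation of chaos estimate \eqref{conditionalpoc} together with \cite[Lemma 1.2.3]{nualart2006malliavin} (closedness of the Malliavin derivative operator), and finally identify the limiting SDE as \eqref{commonnoisemalliavin}. First I would fix $N$ and write the IPS as a single $\mathbb R^{dN}$-valued classical SDE driven by the $(m_0 + mN)$-dimensional Brownian motion $(W^0, W^{1,1},\dots,W^{1,N})$; by $({\bf H}_1)$ and the Lipschitz property of the lifted coefficients (recorded in Lemma \ref{wassemp}), \cite[Theorem 2.2.1]{nualart2006malliavin} applies and gives $X^{i,N}_t \in \mathcal D(D)$ together with the linear SDE for $D^0_s X^{i,N}_t$. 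The only point requiring care here is to rewrite the Jacobian of the lifted drift/diffusion in terms of the original derivatives via the empirical-projection identity \eqref{prop:DerivativeRelations-Space-2-Lions}: differentiating $b(t,x^i,\bar\mu^N)$ in the $j$-th block produces $\nabla b(t,X^{i,N}_t,\bar\mu^N_t)\delta_{ij} + N^{-1}\partial_\mu b(t,X^{i,N}_t,\bar\mu^N_t)(X^{j,N}_t)$, and summing the resulting convolution against $D^0_s X^{j,N}_t$ over $j$ yields precisely the empirical-average term $\frac1N\sum_j \partial_\mu b(\cdots)(X^{j,N}_t) D^0_s X^{j,N}_t$, which in the limit becomes the $\bE^1[\,\cdot\,]$ term in \eqref{commonnoisemalliavin}.

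Next I would derive the uniform-in-$N$ a priori bound $\sup_{0\le s\le T}\max_{1\le i\le N}\bE\|D^0_s X^{i,\cdot}\|_{\mathcal C_T}^2 \le K(1+\bE|\xi|^2)$. This is a standard Gronwall argument: square the integral form of the linear SDE for $D^0_s X^{i,N}_t$, take expectations, use the Burkholder–Davis–Gundy inequality on the two stochastic integrals, bound $\|\nabla b\|, \|\partial_\mu b\|$ etc.\ by the uniform constant from $({\bf H}_2)$ (the $C_b^{1,(1,0)}$ assumption is exactly what makes this work), exchange the empirical average and the maximum over $i$ using Jensen, and close the loop with Gronwall in $t$; the bound is uniform in $s$ because the only $s$-dependence is in the initial datum $\sigma^0(s,X_s^{i},\mu_s)$ which is bounded in $L^2$ by $({\bf H}_1)$ and the moment bound of Lemma \ref{wassemp}. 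The same computation, applied to differences, shows $(D^0_s X^{i,N}_\cdot)_N$ is Cauchy: subtract the equations for two indices/limits, and control the coefficient differences via the joint continuity in $({\bf H}_2)$ plus the $\mathcal W_2$-convergence $\bar\mu^N_t\to\mu^1_t$ and the $L^2$-convergence $X^{i,N}\to X^i$ from \eqref{conditionalpoc}; here one also needs that the candidate limiting process $D^0_s X$ solves \eqref{commonnoisemalliavin}, whose own well-posedness (existence, uniqueness, and the estimate \eqref{eq:General2ndMomentEstimatefor3.1}) follows from the same Gronwall/BDG scheme applied directly to \eqref{commonnoisemalliavin}, viewed as a linear SDE with bounded random coefficients and the McKean–Vlasov-type term $\bE^1[\partial_\mu(\cdots)(X_r)D^0_s X_r]|_{z=X_r}$ treated as an inhomogeneity measured in $L^2(\Omega^1)$.

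With convergence of $X^{i,N}\to X^i = X$ in $L^2(\mathcal C_T)$ and of $D^0 X^{i,N}$ to the solution of \eqref{commonnoisemalliavin} in $L^2(\Omega\to\mathcal H)$ (uniformly in the base point $s$), I would invoke \cite[Lemma 1.2.3]{nualart2006malliavin} to conclude $X_t\in\mathcal D(D)$ and that $D^0 X$ equals this limit, hence solves \eqref{commonnoisemalliavin}; the bound \eqref{eq:General2ndMomentEstimatefor3.1} is then inherited from the uniform-in-$N$ estimate by lower semicontinuity of the $L^2$-norm. The main obstacle, and the step deserving the most care, is the passage to the limit in the nonlinear measure-dependent terms: one must show that $\frac1N\sum_{j=1}^N \partial_\mu b(r,X^{i,N}_r,\bar\mu^N_r)(X^{j,N}_r)\,D^0_s X^{j,N}_r$ converges in $L^2$ to $\bE^1[\partial_\mu b(r,z,\mu_r)(X_r)D^0_s X_r]|_{z=X^i_r}$. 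This requires combining (i) the uniform $L^2$-bound and $L^2$-convergence of the $D^0_s X^{j,N}$, (ii) the uniform boundedness and joint continuity of $\partial_\mu b$ from $({\bf H}_2)$ so that $\partial_\mu b(r,X^{i,N}_r,\bar\mu^N_r)(\cdot)$ is close to $\partial_\mu b(r,X^i_r,\mu_r)(\cdot)$, and (iii) an exchangeability/law-of-large-numbers argument replacing the empirical average over $j$ by the $\bE^1$ expectation, exploiting that the $X^{j,N}$ are conditionally (given $\mathcal F^0$) exchangeable and that \eqref{conditionalpoc} controls the associated fluctuations. Packaging (i)--(iii) cleanly — in particular handling the fact that both the "test point'' $X^{i,N}_r$ and the integrating variable run over the same particle cloud, and that $\partial_\mu b(r,z,\mu)(y)$ must be evaluated along correlated arguments — is where the real work lies; everything else is Gronwall, BDG, and bookkeeping.
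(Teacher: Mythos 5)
Your proposal follows essentially the same route as the paper's proof: Malliavin-differentiate the lifted IPS via the empirical-projection identity \eqref{prop:DerivativeRelations-Space-2-Lions}, obtain a uniform-in-$N$ Gronwall/BDG bound, invoke \cite[Lemma 1.2.3]{nualart2006malliavin} to transfer differentiability through the particle limit, and identify the limit equation by combining conditional propagation of chaos, the joint continuity and uniform boundedness of the derivatives from $({\bf H}_2)$, and a conditional law-of-large-numbers/exchangeability argument for the empirical measure-derivative averages. The only cosmetic differences are the reference used for the finite-dimensional differentiability step and that the paper establishes well-posedness of \eqref{commonnoisemalliavin} by a Banach fixed-point argument rather than directly by Gronwall, with the law-of-large-numbers step applied to the conditionally i.i.d.\ non-interacting particles $X^j$ after first passing from $X^{j,N}$ to $X^j$.
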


To proceed, we make two comments on Theorem \ref{MD0}.
\begin{remark}
Obviously, Theorem \ref{MD0}, SDE \eqref{commonnoisemalliavin} and \eqref{eq:General2ndMomentEstimatefor3.1} can be written just the same for the non-IPS $(X^i)$ of \eqref{nonIPS} leading to $(D^0X^i)_i$ and \eqref{PP}: namely, for $i=1,\ldots,N$, 
    \begin{equation}
    \label{PP}
    \begin{split}
        D_s^0X_t^i  &= \sigma^0
        (s,X_s^i,\mu_s^i)\\
        &\quad+ \int_s^t\big( \nabla b(r,X_r^i,\mu_r^i)D_s^0X_r^i   
        +  
        \bE^1\big[\partial_\mu b (r,z,\mu_r^i)(X_r^i)D_s^0X_r^i 
        \big ]\big|_{z=X_r^i}\big) \dd r 
        \\
        &
        \quad+ \int_s^t \big(\nabla \sigma^0(r,X_r^i,\mu_r^i)D_s^0X_r^i  
        + 
        \bE^1
        \big[\partial_\mu \sigma^0(r,z,\mu_r^i)  
        (X_r^i)D_s^0X_r^i \big]\big|_{z=X_r^i} \big)\dd W_r^{0}  
        \\
        &
        \quad+  \int_s^t\big(\nabla \sigma^1 (r,X_r^i,\mu_r^i)D_s^0X_r^i  
        +\bE^1\big[\partial_\mu \sigma^1  (r,z,\mu_r^i) 
        (X_r^i)D_s^0X_r^i\big]\big|_{z=X_r^i}\big)\dd W_r^{1,i }.
        \end{split} 
        \end{equation}
For each $i$ the process $D^0X^i$ of \eqref{PP} shares the same distribution as $D^0 X$ of \eqref{commonnoisemalliavin}. 

As a second observation, in \eqref{PP} the first stochastic integral is equal to 
\begin{align}
\label{PP-auxiliary-dW0-expanded}
 \sum_{j=1}^{m_0}\int_s^t \big(\nabla \sigma^0_j(r,X_r^i,\mu_r^i)D_s^0X_r^i 
        + 
        \bE^1
        \big[\partial_\mu \sigma^0_j(r,z,\mu_r^i) 
        (X_r^i)D_s^0X_r^i \big]\big|_{z=X_r^i} \big)\dd W_r^{0,j},   
\end{align}
where $W^{0,j}$ denotes the $j$-th component of $W^{0}$, and $\sigma_i^0$  stands  for the $i$-th column vector of $\sigma^0$. The second stochastic integral in \eqref{PP} can be understood in the same manner. 

\end{remark}

\begin{remark}
A close inspection of the proof for Theorem \ref{MD0} reveals that (i) the qualitative  conditional PoC and (ii) the uniform (in particle number $N$) second-order moment boundedness of $D^0 X^{i,N}$ is essential. Provided that the drift $b$
fulfils a monotone condition in the spatial variable and is Lipschitz continuous under the $L^2$-Wasserstein distance in the measure variable, along with the precondition that $\sigma^0,\sigma^1$ are Lipschitz continuous in the spatial variable and the measure variable,  the essentials (i) and (ii) mentioned previously can also be  established. For the monotone setting, there is much  more sophisticated work to be done in order to implement Step $2$ and Step $3$ in the proof of Theorem \ref{MD0}. Nonetheless, in the present work, we  aim to reveal the spirit on the exploration of Malliavin differentiability for McKean-Vlasov SDEs with common noise. So, the extension of Theorem \ref{MD0} to the setting of the locally Lipschitz coefficients is left for the future work. 
\end{remark}

Below, we move on to complete  the proof of Theorem \ref{MD0}, which is separated into four steps. 
 
\begin{proof}
The proof's main argument is that the Malliavin regularity of the IPS \eqref{IPS} carries through to the non-IPS \eqref{nonIPS} in the particle limit $N$. Thus, we present our proof from the point of view of the Malliavin differentiability of \eqref{nonIPS} and 
\eqref{PP} and not \eqref{Eq:General MVSDE} and \eqref{commonnoisemalliavin}; the result is, of course, the same.

\textit{Step 0. Basic property and well-posedness of the SDE \eqref{PP} and \eqref{commonnoisemalliavin}}.  
Due to (${\bf H}_2$), 
there exists   $C>0$
such that for all  $A\in\mathbb R^{d\otimes m_0}$, $t\in[0,T]$ and $(x,y,\mu)\in\mathbb R^d\times\mathbb R^d\times\mathcal P_2(\mathbb R^d)$, 
\begin{align*}
 \|g(t,x,\mu)A\|_{\rm HS} \le C\|A\|_{\rm HS}\quad \mbox{ and }\quad \|h(t,x,\mu,y)A\|_{\rm HS} \le C\|A\|_{\rm HS},
\end{align*}
where   $g =\nabla b ,\nabla\sigma_i^0 ,\nabla\sigma_i^1 $ and $h(t,x,\mu,y)=\partial_\mu b(t,x,\mu)(y),\partial_\mu\sigma_i^0(t,x,\mu)(y),\partial_\mu\sigma_i^1(t,x,\mu)(y)$. 
With the previous estimates at hand, \eqref{eq:General2ndMomentEstimatefor3.1}
can be obtained from the Burkholder-Davis-Gundy inequality and  the Gronwall inequality in case   \eqref{PP}  is strongly well-posed.

Below, we merely show that \eqref{PP} is well-posed for the case $s=0$ since the general case is analogous. 
To show the strong well-posedness of \eqref{PP} with $s=0$, we appeal to the Banach fixed point approach (similarly to  
\cite[Theorem 1.1]{Ren25}). Define the  set
\begin{align*}
\mathbb H =\Big\{f:[0,T]\times\Omega\rightarrow\mathbb R^{d\otimes m_0}\mbox{ is progressively measurable s.t.}\int_0^T\mathbb E\|f(t)\|_{\rm HS}^2\,\dd t<\infty\Big\},   
\end{align*}
which is a   complete metric space under the metric:
$ 
\rho_\lambda(f,g):=\big(\int_0^T{\rm e}^{-\lambda t} \mathbb E\|f_t-g_t\|_{\rm HS}^2\dd t\big)^{\frac{1}{2}}$, $ f,g\in \mathbb H,  
$  for each $\lambda>0.$ 
Now, we define a map $\eta\ni \mathbb H\mapsto \Phi(\eta) $  to be the right hand side of \eqref{PP} with $D^0X^i$ therein being replaced by $\eta.$ Under $({\bf H}_1)$ and $({\bf H}_2)$, we can infer that $\Phi:\mathbb H\rightarrow\mathbb H$ and $\Phi$ is a contractive map when $\lambda>0$ is chosen large enough. Subsequently, the Banach fixed point theorem enables us to conclude well-posedness.

\textit{Step 1. $L^2$-boundedness of $D^0_s X^{i,N}_t$}. 
Starting from the IPS \eqref{IPS}, as a classical SDE, take its $D^0$ Malliavin derivative: $D^0_s X^{i,N}_t$. 
Clearly,  $D^0_s X^{i,N}_s =0$ for $s > t$, while we obtain from \eqref{prop:DerivativeRelations-Space-2-Lions} (see also \cite{platonovdosreis2023ItowentzelLions}) that for $0\leq s\leq t \leq T <\infty$, 
\begin{equation}
\label{malderivimplicit}
\begin{split}
   \dd 
   D^0_s X^{i,N}_t
    &= \bigg(\nabla b(t,X_t^{i,N},\bar\mu_t^N)D^0_s X^{i,N}_t
    +\frac{1}{N}     \sum_{k=1}^N 
                  \partial_\mu b  (t,X_t^{i,N},\bar\mu_t^N)(X_t^{k,N}) D^0_s X^{k,N}_t \bigg) \dd t \\  
    & 
\quad+\sum_{j=1}^{m_0}\bigg(\nabla \sigma_j^0(t,X_t^{i,N},\bar\mu_t^N)D^0_s X^{i,N}_t\\
    &\qquad\qquad~~+\frac{1}{N}
    \sum_{k=1}^N
      \partial_\mu  \sigma^{0}_j(t,X_t^{i,N},\bar\mu_t^N) (X_t^{k,N}) D^0_s X^{k,N}_t\bigg)
    \dd W_t^{0,j}
    \\
   & +\sum_{j=1}^{m}\bigg(\nabla \sigma_j^1(t,X_t^{i,N},\bar\mu_t^N)D^0_s X^{i,N}_t\\
   &\qquad\qquad+\frac{1}{N}
    \sum_{k=1}^N
      \partial_\mu  \sigma^{1}_j(t,X_t^{i,N},\bar\mu_t^N) (X_t^{k,N}) D^0_s X^{k,N}_t\bigg)
    \dd W_t^{1,i,j}
    \\
    &=:A^i_{s,t}\dd t+\sum_{j=1}^{m_0}B_{s,t}^{i,j}\dd W_t^{0,j}+\sum_{j=1}^{m}C_{s,t}^{i,j}\dd W_t^{1,i,j}, 
\end{split}
\end{equation}
where $D^0_s X^{i,N}_s=\sigma^0(s,X_s^{i,N},\bar\mu_s^N)$. For any fixed $N$, $(D^0 X^{i,N})_{s\leq t}$ is well-defined and well posed by the results in \cite{imkeller2018differentiability} and with exploding norms as $N\to \infty$. 
Next, we show   that such norms do not explode. Indeed,  
 applying It\^o's formula yields that 
\begin{align*}
 & \dd  \|D^0_s X^{i,N}_t \|^2_{\rm HS}
 = 2\langle D^0_s X^{i,N}_t , A_{s,t}^i\rangle_{\rm HS} \dd t 
 + \sum_{j=1}^{m_0}\Big\| B_{s,t}^{i,j} \Big\|^2_{\rm HS}\dd t  
 + \sum_{i=1}^{m}\Big\| C_{s,t}^{i,j} \Big\|^2_{\rm HS}\dd t+\dd M_{s,t}^i, 
\end{align*}
where $M_{s,t}^i$ a    martingale term.

Initially fixing $s$, applying the supremum over $t$ and taking said expectation, one implements the Young and Jensen inequalities and makes use of $({\bf H}_2)$  to obtain the existence of some $C_1>0$ (independent of $N$, but dependent on $d,m$ and $m_0$) such that for all $0\le s\le t\le T$, (with a slight abuse of notation as we write $\|\cdot\|_{\cC_t}$ and not $\|\cdot\|_{\cC_{s,t}}$)
 \begin{align}
 \label{gronwallestimate}
\bE \big[ \big\|D^0_s X^{i,N}_\cdot \big\|^2_{\mathcal C_t}\big]
     &
     \leq 
   \bE  \big[\|\sigma^{0}(s,X_s^{i,N},\bar{\mu}_s^N) \|^2_{\rm HS}\big] \notag
     \\
     &
     + C_1\bE 
     \bigg[ 
      \int_s^t 
    \Big( \| D^0_s X^{i,N}_t \|^2_{\rm HS}
     + \frac{1}{N}\sum_{k=1}^{N} \| D^0_s X^{k,N}_t \|^2_{\rm HS} \Big)
     \dd r\bigg],
 \end{align}
 where the $\bE 
 \big[\sup_{0\leq t \leq T} |M_N(s,t)|\big]$ was treated using the Burkholder-Davis-Gundy inequality. By invoking (${\bf H}_1$), along with Lemma \ref{wassemp}, it follows that for some constants $C_2,C_3>0$ (independent of $N$), 
 we have 
 \begin{equation}\label{ER}
 \begin{split}
    \bE  \big[\|\sigma^{0}(s,X_s^{i,N},\bar{\mu}_s^N) \|^2_{\rm HS}\big] 
     &\leq
C_2 
\bigg(  1 +\bE 
[  |X_s^{i,N}|^2 ] + \frac1N \sum_{k=1}^N 
\bE [ |X_s^{k,N}|^2 ] \bigg)
\\
&
\leq C_3 (1+\bE[|\xi|^2]).
\end{split}
\end{equation}  
Averaging over the particle index $i=1,\ldots,N$, we have
\begin{align} \notag 
 \frac{1}{N}\sum_{i=1}^N \bE \big[ \big\| D^0_s X^{i,N}_\cdot \big\|^2_{\mathcal C_t}\big]
    & 
   \leq 
C_3 (1+\bE[|\xi|^2])   
        + 2C_1 \int_s^t 
        \Bigg[\frac{1}{N}\sum_{i=1}^N\bE \| D^0_s X^{i,N}_r \|^2_{\rm HS}\Bigg]\dd r. 
\end{align}   
Subsequently, Gronwall's inequality enables us to derive that 
\begin{align*}
\label{eq:aux mean moment estimate v1} 
\frac{1}{N}\sum_{i=1}^N \bE \big[ \big\| D^0_s X^{i,N}_\cdot \big\|^2_{\mathcal C_T}\big] 
    \leq  
    C_3 (1+\bE[|\xi|^2]) {\rm e}^{2C_1 T}.
\end{align*}  
Injecting the above estimate  
back into  \eqref{gronwallestimate} and exploiting \eqref{ER}, we obtain
\begin{equation*}\begin{split}\label{E1...NDX2} 
 \bE \big[ \big\| D^0_s X^{i,N}_{s,\cdot} \big\|^2_{\mathcal C_t}\big]   
 &\leq 
   C_3 (1+\bE[|\xi|^2])\\
    &+ C_1 
     \bigg[ 
      \int_s^t 
    \Big( \bE \| D^0_s X^{i,N}_r \|^2_{\rm HS}
     + C_3 (1+\bE[|\xi|^2]) {\rm e}^{2C_1 T} \Big)
     \dd r\bigg]. 
\end{split}
\end{equation*} 
Once  more, applying Gronwall's inequality yields that   
\begin{align*}
    \sup_{N \in \bN}\max_{1\leq i \leq N}\sup_{0\leq s \leq T}  
    \bE \big[ \big\| D^0_s X^{i,N}_\cdot \big\|^2_{\mathcal C_T}\big]<\infty. 
\end{align*} 
This, besides  Lemma \ref{wassemp}, 
implies that $X^i \in \bD^{1,2}$ and $D^0_s X^i_t = \lim_{N\to\infty}D^0_s X^{i,N}_t $ in $L^2(\mathcal H)$ weakly by applying  \cite[Lemma 1.2.3 and Proposition 1.5.5]{nualart2006malliavin}.  
Two points need to be remarked. Firstly, we still need to identify the SDE that this limiting object satisfies. Secondly, for any $i=1,\ldots,N$ $\bP$-a.s.~the processes $X^{i}_t,X^{i,N}_t$ and $D^0_s X^{i}_t,D^0_s X^{i,N}_t$ are continuous in time $t$ (given $s$).

\textit{Step 2. Convergence of the measure derivative terms.} 
Fix $i$. We now prove convergence of the measure derivative terms in \eqref{malderivimplicit} to the corresponding terms in \eqref{PP} (and \eqref{PP-auxiliary-dW0-expanded}). Recall that $\mu^i$ is a $\Omega^0$-random variable only. 
Define the auxiliary quantities  
\begin{align*}
\Lambda_{s,t}^{b,i} := 
   \frac{1}{N}\sum_{k=1}^N
   \partial_\mu b(t,X_t^{i,N},\bar{\mu}_t^N )(X^{k,N}_t) D^0_s X^{k,N}_t  
   - {\bE}^{1} \big[\partial_\mu b (t,z,\mu_t^i ) (X^i_t) D^0_s X^i_{t}\big] \big|_{z=X_t^i}
\end{align*}
  and similarly, comparing \eqref{PP-auxiliary-dW0-expanded} to \eqref{malderivimplicit} and accordingly, define $\Lambda^{\sigma^{0}_j,i}_{s,t}$, $\Lambda^{\sigma^{1}_j,i}_{s,t}$.  
   Note the quantities in the difference are well defined from the results established in the previous step.  
  Taking the conditional square expectation,  
\begin{align*}
&
   \sup_{0\leq s\leq T}\bE^{1}\bigg[\sup_{0\leq r \leq T}\bigg\|
   \frac{1}{N}\sum_{k=1}^N
   \partial_\mu b(r,X_r^{i,N},\bar{\mu}_r^N)(X^{k,N}_r) D_s^0 X_r^{k,N} 
   \\
   &
   \hspace{5cm}- {\bE}^{1} \big[ \partial_\mu b(r,z,\mu_r^i)( X^i_r ) D_s^0 X^i _r\big]\big|_{z=X^i_r}
   \bigg\|^2_{\rm HS} \bigg]
   \\
   &
   \leq 4 \sup_{0\leq s\leq T}\bE^{1}\bigg[\sup_{0\leq r \leq T} \bigg\|
   \frac{1}{N}\sum_{k=1}^N
   \partial_\mu b(r,X_r^{i,N},\bar{\mu}_r^N) (X_r^{k,N}) D_s^ 0X_r^{k,N}
   \\
   &
    \hspace{5cm}- \frac{1}{N}\sum_{k=1}^N
   \partial_\mu b(r,X_r^{i,N},\mu_r^i)(X_r^{k,N}) D_s^0 X_r^{k,N} \bigg\|^2_{\rm HS}\bigg] 
   \\
   &
    + 4 \sup_{0\leq s\leq T}\bE^{1}\bigg[\sup_{0\leq r \leq T}\bigg\|\frac{1}{N}\sum_{k=1}^N
   \partial_\mu b(r,X_r^{i,N},\mu_r^i)(X_r^{k,N}) D_s^0 X_r^{k,N}
   \\
   &
    \hspace{6cm}- 
    \frac{1}{N}\sum_{k=1}^N 
    \partial_\mu b(r,X_r^i,\mu_r^i)(X^k_r) D_s^0 X_r^{k,N}
   \bigg\|^2_{\rm HS}\bigg] 
   \\
   &
    + 4 \sup_{0\leq s\leq T}\bE^{1}\bigg[\sup_{0\leq r \leq T}\bigg\|\frac{1}{N}\sum_{k=1}^N
  \partial_\mu b(r,X_r^i,\mu_r^i)(X^k_r) D_s^0 X_r^{k,N}  
  \\ 
  &
   \hspace{6cm}- \frac{1}{N}\sum_{k=1}^N
   \partial_\mu b(r,X_r^i,\mu_r^i) (X^k_r) D_s^0 X_r^k\bigg\|^2_{\rm HS}\bigg] 
   \\
   &
   + 4 \sup_{0\leq s\leq T}\bE^{1}\bigg[\sup_{0\leq r \leq T}\bigg\|\frac{1}{N}\sum_{k=1}^N
   \partial_\mu b(r,X_r^i,\mu_r^i) (X^k_r)D_s^0X_r^k 
   \\
   &
    \hspace{6cm} - {\bE}^{1} \big[ 
    \partial_\mu b(r,z,\mu_r^i)( X^i_r ) D_s^0 X^i _r\big]\big|_{z=X^i_r}
    \bigg\|^2_{\rm HS}\bigg]. 
\end{align*}
\textit{For the first term.} Using Lemma \ref{wassemp}, the joint continuity of $\partial_\mu b$ and Jensen's Inequality we bound $\bP^0$-a.s.~this term by $4\sup_{0\leq s\leq T}\bE^{1}\big[\sup_{0\leq r \leq T}|\frac{1}{N}\sum_{k=1}^N D_s^0X^k_r|^2\big]\cdot \varepsilon_N$ for some bounded sequence $(\varepsilon_N)_{N\geq 1}\to 0$ as $N \to\infty$. The $\bP^0$-a.s.~boundedness of $\sup_{0\leq s\leq T}\bE^{1}\big[\sup_{0\leq r \leq T}|\frac{1}{N}\sum_{k=1}^N D_s^0 X^{k,N}_r|^2\big]$, which is justified as in \textit{Step 1}, implies this converges $\bP^0$-a.s.~to zero in the limit $N \to \infty$.  

\textit{For the second term}: the (conditional) Propagation of Chaos ascertains that $X_r^{k,N}$ converges $\bP^0$-a.s.~to $X_r^k$ in $L^2(\Omega^{1})$ and since, by $({\bf H}_2)$, $\partial_\mu b$ is jointly continuous and uniformly bounded then the convergence results follows immediately from the $L^2(\Omega^{1})$-boundedness of $D_s^0 X_r^{k,N}$ (uniformly in $N$) from \textit{Step 1} and conditional dominated convergence.

\textit{For the third term.} We also have convergence to zero, $\bP^0$-a.s.~in the limit $N\to\infty$. This is justified by boundedness and uniform continuity of $\partial_\mu b$ and the already established fact (in the proof's \textit{Step 1}) that $D^0_s X^i_t = \lim_{N\to\infty} D^0_s X^{i,N}_t$ in $L^2(\cC_T)$-weakly, before once again applying conditional dominated convergence.  

\textit{For the fourth term}. For the particle system $\{(X^k,D^0 X^k)\}_k$, one  observes by strong symmetry that the particles are exchangeable (and pairwise conditionally independent with uniformly (in $N$) finite second moments). That is, 
\begin{align*}
    \{(X_r^{\pi(1)},D_s^0 X_r^{\pi(1)}),\ldots,(X_r^{\pi(N)},D_s^0 X_r^{\pi(N)})\} \stackrel{\text{Law}}= \{(X_r^1,D_s^0X_r^1),\ldots, (X_r^N,D_s^0X_r^N)\}
\end{align*}
for any permutation $\pi$ of $\{1,\ldots,N\}$.  
By the strong Law of Large Numbers, $\bP^0\otimes\bP^{1}$-a.s. 
\begin{align*}
    \lim_{N\to\infty }\bigg|
    \Big\{
    \frac{1}{N}\sum_{k=1}^N
   \partial_\mu b(r,z,\mu_r^i)(X^k_r)D_s^0X_r^k 
   - \bE^{1}\big[\partial_\mu b (r,z,\mu_r^i)(X^i_r) D_s^0 X_r^i\big]\big]\Big\}\Big|_{z=X^i_r}
   \bigg|^2 =0,
\end{align*}
where the case $k=i$ is trivial using the boundedness of $\partial_\mu b$, the $L^2$-bounds of $D^0X^\cdot$ established earlier and that the weight $1/N$ is vanishing; the continuity in time of the processes extends the result uniformly over $[s,T]$; the result holds with ${z=X^i_r}$ omitted and for any $z\in\bR^d$.  
Uniform boundedness of $\partial_\mu b$, and the $L^2(\Omega^{1})$-uniform bounds on $D^0_s X^i_t$ and $D^0_s X^{i,N}_t$, justified in \emph{Step 1}, allow us to apply conditional dominated convergence and conclude this final term converges to zero $\bP^0\otimes\bP^1$-a.s.
Thus for any fixed $i=1,\ldots,N$ we have $\bP^0$-a.s.~ 
 \begin{align*}    
 \limsup_{N\to\infty}\bE^{1}\Big[\sup_{0\leq s\leq r\leq T}|\Lambda^{b,i}_{s,r}|^2\Big] = 0.
 \end{align*} 

Gathering the arguments, an application of the Cauchy-Schwarz inequality then yields the required convergence
\begin{align}
\label{lambdabconvergence}
\int_s^t & 
\frac{1}{N}\sum_{k=1}^N  
   \partial_\mu b
   (r,X_r^{i,N},\bar{\mu}_r^N)(X^{k,N}_r)
       D_s^0 X_r^{k,N} \dd r \notag
   \\
   &
   \longrightarrow \int_s^t {\bE}^1\Big[ 
   \partial_\mu b (r,z,\mu_r^i)(X_r^i) D_s^0 X_r^i \Big]\bigg|_{z=X^i_r}
   \dd r \ \textrm{ as } \ N\to \infty 
   \end{align}
   in $L^2(\Omega^{1})$ for any $t\in[s,T]$ conditional on a path $\omega^0$ $\bP^0$-a.s.  
   To deal with the corresponding stochastic integral terms involving the measure derivatives of $\sigma^{0}_j$, $j=1,\ldots,m_0$ and $\sigma^{1}_j$, $j=1,\ldots,m$, we apply the above method, replacing the Cauchy-Schwarz inequality with an additional It\^o Isometry argument, localising and arguing over quadratic variation.   
   
\textit{Step 3. Identifying the limiting equation of \eqref{malderivimplicit}.}  With the results of \textit{Step 2} at hand, the remaining arguments are classical and thus presented in a streamlined fashion. 
   Fix $i$. Set for $s\in[0,T]$ 
   \begin{align*} 
       \delta\sigma_s^{0,i} := 
       \sigma^{0}\big( s, X_s^{i,N},\bar{\mu}_s^N  \big) 
       - \sigma^{0}\big( s, X_s^i,\mu_s^i \big),  
   \end{align*}
   and compare the McKean-Vlasov SDEs \eqref{malderivimplicit} and \eqref{PP}. We prove next that \eqref{PP} is the $L^2$-limit equation of \eqref{malderivimplicit} (for any fixed $i$ as $N\to \infty$ ):  
   \begin{align*}
       D_s^0 X_t^{i,N} - D_s^0 X_t^i 
       =& \delta\sigma_s^{0,i}
       +\int_s^t \widehat A_{s,r}^i \hspace{0.1cm}\dd r + \int_s^t \Lambda^{b,i}_{s,r} 
        \hspace{0.1cm}\dd r
        \\
        &
        +\sum_{j=1}^{m_0}\int_s^t \big( \widehat B_{s,r}^{i,j}   +  \Lambda^{\sigma^0_j,i}_{s,r} \big) 
        \hspace{0.1cm}\dd W_r^{0,j}
        +\sum_{j=1}^{m}\int_s^t \big( \widehat C_{s,r}^{i,j}   +  \Lambda^{\sigma^1_j,i}_{s,r} \big)
        \hspace{0.1cm}\dd W^{1,i,j}_r,
   \end{align*}
where the $\Lambda$ processes were defined in Step 2 and we define additionally
\begin{align*}
&\widehat A_{s,r}^i := \nabla b (r,X_r^{i,N},\bar{\mu}_r^N) D_s^0 X_r^{i,N} - \nabla b (r,X_r^i,\mu_r^i) D_s^0 X^i_r,
\\
&
\widehat B_{s,r}^{i,j} :=\nabla \sigma^{0}_j (r,X_r^{i,N},\bar{\mu}_r^N ) D_s^0 X_r^{i,N} 
       - \nabla \sigma^{0}_j (r,X_r^i,\mu_r^i) D_s^0 X_r^i , \qquad j=1,\ldots,m_0
\\
&
\widehat C_{s,r}^{i,j} := \nabla \sigma^{1}_j (r,X_r^{i,N},\bar{\mu}_r^N ) D_s^0 X_r^{i,N} 
      - \nabla \sigma^{1}_j  (r,X_r^i,\mu_r^i ) D_s^0 X_r^i, \qquad j=1,\ldots,m. 
\end{align*}

Taking the square expectation, and using the It\^o isometry, we obtain that
\begin{equation}
    \label{PsiN eq}
    \begin{split}
    \bE\Big[\|D_s^0 X_t^{i,N} - D_s^0 X_t^i\|^2_{\rm HS}\Big]
    -\Psi_N  
   & =  \bE\left[\left\|\int_s^t \widehat A^i_{s,r} \dd r\right\|^2_{\rm HS}\right]
    +\bE\left[\sum_{j=1}^{m_0} \int_s^t \|\widehat B_{s,r}^{i,j}\|^2_{\rm HS}\dd r\right] \\  
   &\quad +\bE\left[ \sum_{j=1}^{m}\int_s^t \|\widehat C_{s,r}^{i,j}\|^2_{\rm HS} \dd r\right], 
\end{split}
\end{equation}
where 
\begin{align*}
    \Psi_N:= 
    &
    \bE\left[  \|\delta\sigma^{0,i}_s\|^2_{\rm HS} \right] 
    + 2\bE\left[\int_s^t\langle\delta\sigma^{0,i}_s, \widehat A_{s,r}^i\rangle_{\rm HS} \dd r\right]
    +2 \bE\left[\int_s^t\langle\delta\sigma^{0,i}_s, \Lambda^{b,i}_{s,r}\rangle_{\rm HS}\dd r\right] 
    \\
    &
    + 2\bE\left[\Big\langle\int_s^t \widehat A^i_{s,r} \dd r ,\int_s^t \Lambda^{b,i}_{s,r}\dd r\Big\rangle_{\rm HS}\right]
    +2\bE\left[\sum_{j=1}^{m_0}\int_s^t \langle \widehat B^{i,j}_{s,r}, \Lambda^{\sigma_j^0,i}_{s,r}\rangle_{\rm HS}\dd r\right]
    \\
    &
    +2\bE\left[\sum_{j=1}^{m}\int_s^t \langle \widehat C^{i,j}_{s,r}, \Lambda^{\sigma^1_j,i}_{s,r} \rangle_{\rm HS}\dd r\right]
    +\bE\left[\left\|\int_s^t \Lambda^{b,i}_{s,r} \dd r\right\|^2_{\rm HS}\right] 
    \\
    &
    +\bE \left[\sum_{j=1}^{m_0}\int_s^t\|\Lambda^{\sigma^0_j,i}_{s,r}\|^2_{\rm HS} \dd r\right]   
    + \bE\left[\sum_{j=1}^m\int_s^t \|\Lambda^{\sigma^1_j,i}_{s,r}\|^2_{\rm HS} \dd r\right]\color{black},
\end{align*}
 noting that since $W^0$ and $W^{1,\cdot}$ are independent no cross variation terms appear. 
 
The joint continuity property of $\sigma^{0}$ and the (conditional) Propagation of Chaos implies $L^2(\cC_T)$ and $\bP^0$-a.s.~convergence of $\delta\sigma^{0,i}_\cdot$ and  $\bE^1 [\, \|\delta\sigma^{0,i}_\cdot\|^2_{\rm HS}]\to 0$ respectively as $N\to \infty$. 
The last three terms of $\Psi_N$ vanish using the results of \textit{Step 2} (possibly along a subsequence if needed). 
The remaining five terms also vanish in the $N$ limit using classic arguments: one mimics as needed the arguments used in \textit{Step 2}, using the tower property $\bE[\cdot]=\bE[\bE^1[\cdot]]$ to draw on the conditional Propagation of chaos \eqref{conditionalpoc}, using the joint continuity and uniform boundedness of the derivatives $\nabla b$, $\nabla \sigma^{0}$ and $\nabla \sigma^{1}$ plus the $L^2$-convergence (possibly along a subsequence) and $L^2$-bounds of $D^0X^{i,N},D^0X^i$ coupled with repeated use of the Cauchy-Schwarz and Jensen inequalities and the convergence of the $\Lambda$ terms established in \textit{Step 2}.  
In conclusion, the LHS of \eqref{PsiN eq} converges to zero (possibly along a subsequence if needed), thus so must its RHS (using additionally the continuity in time of the processes involved) and it follows that \eqref{PP} is precisely the $L^2$-limiting equation of the IPS \eqref{malderivimplicit}. 
\end{proof}
The Malliavin differentiability of \eqref{Eq:General MVSDE} with respect to the idiosyncratic $W^1$ is classical. 
\begin{theorem}
\label{MD1}
Assume $({\bf H}_1)$ and let $b,\sigma^0,\sigma^1$ be continuously differentiable in space with uniformly bounded spatial derivatives. Then, the Malliavin derivative of \eqref{Eq:General MVSDE} with respect to the idiosyncratic Brownian motion $(W^1_t)_{t\ge0}$ satisfies the linear SDE: for any $t\ge s$,
    \begin{equation*} 
    \begin{split}
        D_s^1 X_t =  
        &
        \sigma^1(s,X_{s}, \mu_s)
        + \int_s^t \nabla b(r,X_r,\mu_r) D_s^1 X_r \dd r
        + \int_s^t \nabla \sigma^0(r,X_r,\mu_r) D_s^1 X_r \dd W^{0}_r   
        \\
        &
        + \int_s^t\nabla \sigma^1(r,X_r,\mu_r) D_s^1 X_r \dd W^{1}_r,
        \end{split}
        \end{equation*}
        and  $D^1_s X_t = {\bf0}_{d\times m}$ for any $s>t$. Moreover, there is a constant   $
        K=K(T,L)>0$  such that 
\begin{align*}
\sup_{0\leq s\leq T} \bE\big[\big\|D^1_s X_{.}\big\|_{\mathcal C_T}^2 \big]
\leq K(1+\bE|\xi|^2).
\end{align*} 
\end{theorem}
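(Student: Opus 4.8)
The plan is to mirror the four-step strategy of Theorem \ref{MD0}, but the situation is substantially simpler because the idiosyncratic noise $W^1$ does not enter the conditional law $\mu_t$ (which is $\Omega^0$-measurable only), so the Malliavin derivative $D^1$ annihilates every measure-dependent term and no $\partial_\mu$-contributions survive. Concretely, I would first pass to the IPS \eqref{IPS}, which is a classical (finite-dimensional) SDE with globally Lipschitz, continuously differentiable coefficients once lifted to $(\bR^d)^N$, and take its Malliavin derivative $D^{1,i}$ with respect to the $i$-th idiosyncratic Brownian motion $W^{1,i}$. Since $W^{1,i}$ is independent of $W^{1,j}$ for $j\neq i$ and of $W^0$, the chain rule gives that $D^{1,i}_s X^{j,N}_t = 0$ for $j\neq i$, and hence the empirical-measure term $\bar\mu^N_t = N^{-1}\sum_j \delta_{X^{j,N}_t}$ contributes, via \eqref{prop:DerivativeRelations-Space-2-Lions}, only the single $j=i$ summand weighted by $1/N$, which vanishes in the limit. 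This is the structural reason there is no $\partial_\mu$-term in the limiting SDE, in contrast with \eqref{commonnoisemalliavin}.

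\textbf{Step 1 ($L^2$-bounds).} Starting from the explicit linear SDE satisfied by $D^{1,i}_s X^{i,N}_t$ (with initial datum $\sigma^1(s,X^{i,N}_s,\bar\mu^N_s)$ at $t=s$ and zero for $t<s$), apply It\^o's formula to $\|D^{1,i}_s X^{i,N}_t\|^2_{\rm HS}$, take $\sup_t$, use Burkholder--Davis--Gundy on the martingale part, and invoke the uniform boundedness of $\nabla b,\nabla\sigma^0,\nabla\sigma^1$ together with $({\bf H}_1)$ and Lemma \ref{wassemp} to control the initial term $\bE\|\sigma^1(s,X^{i,N}_s,\bar\mu^N_s)\|^2_{\rm HS}\le C(1+\bE|\xi|^2)$. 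Because the cross-particle derivative terms carry the weight $1/N$ and are, again, only the single $j=i$ term, the Gronwall step is immediate and does not even require the averaging-over-$i$ trick used in Theorem \ref{MD0}; one obtains directly
\[
\sup_{N}\max_{1\le i\le N}\sup_{0\le s\le T}\bE\big[\|D^{1,i}_s X^{i,N}_\cdot\|^2_{\mathcal C_T}\big]<\infty.
\]
By \cite[Lemma 1.2.3 and Proposition 1.5.5]{nualart2006malliavin} together with the conditional PoC of Lemma \ref{wassemp}, this yields $X^i\in\bD^{1,2}$ with $D^1_s X^i_t$ the weak $L^2(\mathcal H)$-limit of $D^{1,i}_s X^{i,N}_t$.

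\textbf{Steps 2--3 (identifying the limit).} Since there are no measure-derivative terms, Step 2 of Theorem \ref{MD0} collapses: I only need $L^2$-convergence of the drift and diffusion increments $\nabla b(r,X^{i,N}_r,\bar\mu^N_r)D^{1}_s X^{i,N}_r - \nabla b(r,X^i_r,\mu^i_r)D^1_s X^i_r$ (and the $\sigma^0,\sigma^1$ analogues), plus convergence of the initial term $\sigma^1(s,X^{i,N}_s,\bar\mu^N_s)\to\sigma^1(s,X^i_s,\mu^i_s)$. These follow from joint continuity and uniform boundedness of the spatial derivatives, the conditional PoC \eqref{conditionalpoc}, and the Step 1 $L^2$-bounds, exactly as in Step 3 of Theorem \ref{MD0} (tower property, Cauchy--Schwarz, It\^o isometry on the $dW^0$ and $dW^{1,i}$ integrals, no cross-variation by independence, possibly along a subsequence). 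This identifies the displayed linear SDE as the $L^2$-limit equation for $D^1 X^i$, hence for $D^1 X$. Well-posedness of that linear SDE and the stated second-moment estimate follow from a Banach fixed-point argument on the space $\mathbb H$ with the weighted metric $\rho_\lambda$ (as in Step 0 of the previous proof), now even simpler since the coefficients are genuinely affine in the unknown; BDG and Gronwall then give the bound $\sup_{s}\bE\|D^1_s X_\cdot\|^2_{\mathcal C_T}\le K(1+\bE|\xi|^2)$.

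The main — and really only — subtlety is bookkeeping the independence structure of the idiosyncratic noises to see that $D^{1,i}$ kills the off-diagonal particles and that the surviving $\partial_\mu$-contribution is $O(1/N)$; once that is in place the argument is strictly easier than Theorem \ref{MD0} and, as the paper notes, entirely classical. I would therefore present it briefly, citing the structure of the proof of Theorem \ref{MD0} for the repetitive estimates rather than reproducing them.
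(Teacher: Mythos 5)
Your overall strategy (differentiate the IPS \eqref{IPS} and pass to the particle limit, mimicking Theorem \ref{MD0}) is a legitimate alternative route, but it is not the one the paper takes: the paper proves Theorem \ref{MD1} by a Picard iteration directly on \eqref{Eq:General MVSDE}, building a sequence of standard SDEs, differentiating each term of the sequence via \cite{imkeller2018differentiability}, and closing with \cite[Lemma 1.2.3]{nualart2006malliavin}, deferring the details to \cite[Theorem 3.4]{reis2023malliavin} and \cite[Proposition 3.1]{Crisan2018}. More importantly, your version contains a genuine error. You claim that ``the chain rule gives $D^{1,i}_s X^{j,N}_t = 0$ for $j\neq i$'' because $W^{1,i}$ is independent of $W^{1,j}$ and of $W^0$. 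This is false: in the IPS \eqref{IPS} the particles are coupled through the empirical measure $\bar\mu^N_t$, so $X^{j,N}_t$ is a functional of \emph{all} the driving paths $W^0,W^{1,1},\dots,W^{1,N}$, not only of $W^0$ and $W^{1,j}$; independence of the noises does not make $X^{j,N}$ measurable with respect to $\sigma(\xi^j,W^0,W^{1,j})$. Hence $D^{1,i}_s X^{j,N}_t\neq 0$ in general, the ``structural reason'' you give for the disappearance of the $\partial_\mu$-terms is invalid, and your simplified Step 1 (skipping the averaging-over-$i$ Gronwall trick) rests on this false premise.

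The gap is repairable, but it requires an estimate you have omitted. One must write the full coupled linear system for $(D^{1,i}_s X^{j,N}_t)_{j=1}^{N}$, whose $j$-th equation contains $\frac1N\sum_{k=1}^{N}\partial_\mu b(t,X^{j,N}_t,\bar\mu^N_t)(X^{k,N}_t)\,D^{1,i}_s X^{k,N}_t$ (and the $\sigma^0,\sigma^1$ analogues) and whose initial datum is $\sigma^1(s,X^{i,N}_s,\bar\mu^N_s)\,\1_{\{i=j\}}$, and then prove by a Gronwall argument on $\bE\|D^{1,i}_s X^{i,N}_t\|^2_{\rm HS}+\sum_{j\neq i}\bE\|D^{1,i}_s X^{j,N}_t\|^2_{\rm HS}$ that the off-diagonal derivatives satisfy $\bE\|D^{1,i}_s X^{j,N}_t\|^2_{\rm HS}=O(N^{-2})$ while the diagonal one stays $O(1)$. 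Only with that bound does the measure-derivative term vanish in $L^2$ as $N\to\infty$: the $k=i$ summand carries an explicit $1/N$ weight, and the off-diagonal part is controlled by $\frac1N\sum_{k\neq i}C\,\bE[\|D^{1,i}_s X^{k,N}_t\|^2_{\rm HS}]^{1/2}=O(1/N)$. With this in place your Steps 2--3 go through and you recover the stated linear SDE and moment bound; without it the argument does not close. The paper's Picard route avoids this bookkeeping entirely, at the price of re-running the fixed-point construction of \cite{reis2023malliavin} in the conditional setting.
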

We point that using further approximation arguments, this result holds only under $({\bf H}_1)$ (understanding $\nabla b,\nabla \sigma^0,\nabla \sigma^1$ in a generalised sense, see \cite[Prop.~1.2.4]{nualart2006malliavin}).
\begin{proof}
The Malliavin derivative with respect to the idiosyncratic noise $(W^1_t)_{t\ge0}$ of $(X_t)_{t\ge0}$ \eqref{Eq:General MVSDE} will ignore the conditional dependence on $\omega^0 \in \Omega^0$, in the sense that the Cameron-Martin perturbations are solely against the Brownian motion that is not being conditioned upon.  
Thus, the result follows promptly from the same arguments as in  \cite[Theorem 3.4]{reis2023malliavin} and \cite[Proposition 3.1]{Crisan2018}. 
Methodologically: via a Picard iteration argument build a convenient sequence of \textit{standard SDEs} that converges to the McKean-Vlasov SDE, apply a suitable Malliavin differentiability result for regular SDEs \cite{imkeller2018differentiability} to the sequence and then pass to limit using the closedness of the Malliavin operator  \cite[Lemma 1.2.3]{nualart2006malliavin}.
\end{proof}


\begin{section}{Conditional integration by parts formula} 
For a random variable  $\eta,$ we write $[\eta]$ as its law. 
For convenience, denote the solution to \eqref{Eq:General MVSDE} as $(X_t^{[\xi]})_{t\ge0}$ to highlight the distribution of $\xi$,  and, for each $t\ge0,$ write $\mu_t^\xi := \Law(X_t^{[\xi]}(\omega^0,\cdot))$.  
Consider  the decoupled SDE associated with \eqref{Eq:General MVSDE}: 
\begin{align}
\label{decoupledsde}
    &\dd X_t^{x,[\xi]} = b\big(t,X_t^{x,[\xi]},\mu_t^\xi\big)\dd t + 
    \sigma^0\big(t,X_t^{x,[\xi]},\mu_t^\xi\big) \dd W^{0}_t+ 
    \sigma^1\big(t,X_t^{x,[\xi]},\mu_t^\xi\big) \dd W^{1}_t,
    \quad  t>0,
\end{align}
with the initial value $X_0^{x,[\xi]}=x\in\bR^d$. 
The SDE above is a standard one with random coefficients and is related but decoupled from \eqref{Eq:General MVSDE}   
     since the measure term is exogenous (and also independent of $x$). Its well-posedness follows from \cite[Theorem 3.1.1]{prevotrockner2007Book}, observing the integrability of the maps $t\mapsto b(t,0,\mu_t^\xi)$ and $t\mapsto \sigma^l(t,0,\mu_t^\xi)$, $l=0,1$.  
     Results regarding the first variation process with respect to the initial condition follow the almost identical formulation as the unconditional McKean-Vlasov SDE, as presented in \cite{Crisan2018}. We state these for completeness.
    \begin{lemma} 
Let $({\bf H}_1)$ and $({\bf H}_2)$ hold. 
Then,  the map $ \bR^d \ni x \mapsto X_\cdot^{x,[\xi]} \in L^2(\mathcal C_T,\mathbb P) $ is $\bP$-a.s. continuously differentiable and $w_t^x :=(\nabla X_t^{\cdot,[\xi]}) (x)$ satisfies the affine SDE: for any   $0\le s\le t\le T$,
  \begin{align}
  \label{xvariation}
        w_t^x = I_{d\times d}&
        + \int_s^t 
        \nabla b(r,\cdot,\mu_r^\xi)(X_r^{x,[\xi]})
        w_r^x \dd r +\int_s^t \nabla \sigma^0(r,\cdot,\mu_r^\xi)(X_r^{x,[\xi]})w_r^x \dd W^{0}_r 
        \\ \notag
        & 
        + \int_s^t \nabla \sigma^1(r,\cdot,\mu_r^\xi)(X_r^{x,[\xi]})w_r^x \dd W^{1}_r.
        \end{align}
\end{lemma}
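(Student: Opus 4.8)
The plan is to establish differentiability of $x \mapsto X_\cdot^{x,[\xi]}$ and derive the affine SDE \eqref{xvariation} for the first variation process by treating \eqref{decoupledsde} as a \emph{standard} SDE with random (but $x$-independent) coefficients, since the measure argument $\mu_t^\xi$ is frozen/exogenous. Under $({\bf H}_1)$ and $({\bf H}_2)$, the coefficients $\bR^d \ni y \mapsto b(t,y,\mu_t^\xi)$, $\sigma^l(t,y,\mu_t^\xi)$ are $\bP^0$-a.s.~continuously differentiable in $y$ with derivatives uniformly bounded by the Lipschitz constant $L$ (and jointly continuous via the joint continuity built into $C_b^{1,(1,0)}$), and by Theorem \ref{existenceuniqueness} and the moment bound on $\mu_t^\xi$ the integrability conditions $t\mapsto b(t,0,\mu_t^\xi), \sigma^l(t,0,\mu_t^\xi) \in L^2([0,T])$ hold $\bP^0$-a.s. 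First I would recall the classical result on differentiability of the flow of a standard SDE with respect to its initial condition — e.g.~along the lines of \cite{Crisan2018} or the analogous statement in \cite[Prop.~3.1]{Crisan2018} — which gives existence of a modification such that $x\mapsto X_t^{x,[\xi]}$ is $\bP$-a.s.~$C^1$ and whose derivative solves the linear matrix SDE obtained by formally differentiating \eqref{decoupledsde} in $x$; this is exactly \eqref{xvariation} with $s=0$.

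The key steps, in order, would be: (i) fix a path $\omega^0$ outside a $\bP^0$-null set on which $\mu_\cdot^\xi$ has the required continuity and moment integrability, and view \eqref{decoupledsde} as a standard It\^o SDE on $(\Omega^1,\cF^1,\bP^1)$ driven by the two independent Brownian motions (equivalently a single $(m_0+m)$-dimensional one) with deterministic-in-$\omega^1$ time-dependent coefficients satisfying global Lipschitz and linear growth conditions uniformly; (ii) invoke the standard theorem on smooth dependence of SDE solutions on the initial datum to get the $\bP^1$-a.s.~$C^1$ modification and the affine SDE for $w_t^x$, noting the driving equation for $w^x$ is itself linear with bounded coefficients so it is trivially well-posed in $L^2$ with a Gr\"onwall/BDG moment bound; (iii) reassemble over $\omega^0$ using Fubini and a measurable-selection/continuity argument to conclude $x\mapsto X_\cdot^{x,[\xi]}\in L^2(\cC_T,\bP)$ is $\bP$-a.s.~continuously differentiable with $w_t^x = (\nabla X_t^{\cdot,[\xi]})(x)$; (iv) finally observe that the formula \eqref{xvariation} as stated for general $0\le s\le t\le T$ follows from the flow property of \eqref{decoupledsde} together with uniqueness for the linear equation, or is simply a restatement (the natural normalisation being $w_s^x = I_{d\times d}$ at the base time $s$).

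I do not expect any genuine obstacle here; the statement is essentially the standard first-variation-process result transplanted verbatim to the conditional/decoupled setting, and the only care needed is the routine bookkeeping of conditioning on $\omega^0$ and checking that the frozen measure flow supplies enough regularity/integrability — which is immediate from $({\bf H}_1)$–$({\bf H}_2)$ and Theorem \ref{existenceuniqueness}. If anything is mildly delicate, it is producing a \emph{single} modification that is jointly good in $(x,\omega)$ (so that $w^x$ is genuinely the pathwise derivative of the flow and not merely an $L^2$-derivative), but this is handled by the usual Kolmogorov-continuity argument applied to the increments $X_t^{x,[\xi]}-X_t^{x',[\xi]}$ and to $w_t^x - w_t^{x'}$, exactly as in \cite{Crisan2018}; hence I would simply cite that reference and present the argument in the streamlined fashion indicated in the text ("We state these for completeness").
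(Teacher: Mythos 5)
Your proposal matches the paper's own argument: both freeze the exogenous measure flow $\mu^\xi$, view \eqref{decoupledsde} as a standard SDE with random but $x$-independent coefficients satisfying uniform Lipschitz/bounded-derivative conditions, and then invoke the classical differentiability-of-the-flow results (the paper cites \cite{imkeller2018differentiability} and \cite[Theorem 5.29]{book:CarmonaDelarue2018b}, with \cite{Crisan2018} named in the surrounding text) together with well-posedness of the bounded-coefficient affine SDE \eqref{xvariation}. Your additional remarks on conditioning on $\omega^0$ and the Kolmogorov-continuity modification are consistent elaborations of the same route, not a different one.
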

\begin{proof}
Given $X^{x,[\xi]}$ and $\mu^\xi$, and knowing $\nabla b,\nabla \sigma^0,\nabla \sigma^1$ are uniformly bounded, then the linear  SDE \eqref{xvariation} is   well-posed; see e.g. \cite{prevotrockner2007Book,imkeller2018differentiability}. 
For the differentiability statement, recall that the measure component in \eqref{decoupledsde} is independent of the spatial variable $x$ and is exogenous given with good integrability properties (Theorem  \ref{existenceuniqueness}). 
The differentiability of $x\mapsto X^{x,[\xi]}$ follows as a particular case of the results from \cite{imkeller2018differentiability}.  
Refer to \cite[Theorem 5.29]{book:CarmonaDelarue2018b},  which discusses differentiability in a decoupled McKean-Vlasov FBSDE case. 
 \end{proof}

Comparing the SDE in Theorem \ref{MD1} with the SDE  \eqref{xvariation}, we note that these are the same type affine SDEs but with different dimensions and 
initial conditions. Working under the (uniform ellipticity) assumption that there exists some $\delta>0$ such that for all $\zeta\in\bR^d$ and $(t,x,\mu)\in [0,T] \times\bR^d \times \cP_2(\bR^d)$, $\zeta^\top((\sigma^1(\sigma^1)^\top)(t,x,\mu))\zeta \geq \delta |\zeta|^2$, one establishes, exactly as in \cite[Proposition 4.1]{Crisan2018}, the following integration by parts formula. Let $f \in C_b^\infty(\bR^d)$ and $\Phi:[0,T]\times\bR^d\times\cP_2(\bR^d)\to \bD^{1,2}$ 
be a map whose measure derivative and spatial derivative maps are $L^1$-integrable and of linear growth. Then it holds that
 \begin{align*} 
    \bE^1\Big[\nabla f(X_t^{\cdot,[\xi]})(x)
     \Phi(t,x,[\xi])\Big]
    = \frac1t\bE^1\Big[f(X_t^{x,[\xi]})\delta^1\big(r \mapsto g(r)^\top\Phi(t,x,[\xi])\big)\Big],
\end{align*}
where $g(r) :=( (\sigma^1)^\top (\sigma^1(\sigma^1)^\top)^{-1})(r,X_r^{x,[\xi]},\mu_r^\xi)(\nabla X_r^{\cdot,[\xi]})(x)$ and $\delta^1$ is the Skorokhod operator with respect to the idiosyncratic Brownian motion.

More of interest  is an \textit{integration by parts formula for Equation \eqref{Eq:General MVSDE} with respect to the measure variable}. We first need the following lemma. 
\begin{lemma}
\label{lemma42}
    Under $({\bf H}_1)$ and $({\bf H}_2)$, 
    there exists a modification of $X_t^{[\xi]}$ such that the  map $L^2(\Omega) \ni \xi \mapsto X_\cdot^{\xi} \in L^2(\mathcal C_T,\mathbb P)$ is $\bP$-a.s.~Fréchet differentiable in $L^2(\Omega)$. Hence $\partial_\mu X_t^{[\xi]} $ exists and for $v \in \bR^d$, $\Gamma_t^{\xi}(v):= \partial_\mu X_t^{[\xi]}(v)$ satisfies the dynamics:  
    \begin{align}   
        &\Gamma_t^{\xi}(v)
        = \mathbf{\Psi}_t(X)
        +
             \int_0^t\Big( \nabla b(r,\cdot,\mu_r^\xi)(X_r^{[\xi]})\Gamma_r^{\xi}(v) +\bE^1\Big[\partial_\mu b (r,\eta)({X}_r^{[\xi]})\Gamma_r^{\xi}(v) \Big]\Big|_{\eta=(X_r^{[\xi]},\mu_r^\xi)}\Big)
             \dd r\notag
        \\
        &
        + \int_0^t \Big( \nabla \sigma^0(r,\cdot,\mu_r^\xi)(X_r^{[\xi]})\Gamma_r^\xi(v)
        +   \bE^1\Big[ \partial_\mu \sigma^0 (r,\eta)({X}_r^{[\xi]})\Gamma_r^{\xi}(v) \Big]\Big|_{\eta=(X_r^{[\xi]},\mu_r^\xi)}\Big) \dd W_r^{0} \notag
        \\ 
        &
        + \int_0^t \Big( \nabla  \sigma^1(r,\cdot,\mu_r^\xi)(X_r^{[\xi]})\Gamma_r^\xi(v) 
        + \bE^1\Big[ \partial_\mu \sigma^1 (r,\eta)({X}_r^{[\xi]})\Gamma_r^{\xi}(v)\Big]\Big|_{\eta=(X_r^{[\xi]},\mu_r^\xi)}\Big)\dd W_r^{1},
     \label{muvariation}
\end{align}
where setting $\Xi_r:=(r,X_r^{[\xi]},\mu_r^\xi)$ for $r\geq 0$ we have   
\begin{align*}
\mathbf{\Psi}_t(X) :
&= \int_0^t \partial_\mu b (\Xi_r)(X_r^{v,[\xi]})\nabla_x X_r^{v,[\xi]} \dd r 
+ \int_0^t  \partial_\mu \sigma^0(\Xi_r)(X_r^{v,[\xi]})\nabla_xX_r^{v,[\xi]} \dd W_r^{0}
\\
&\quad
+
\int_0^t \partial_\mu \sigma^1(\Xi_r)(X_r^{v,[\xi]})\nabla_xX_r^{v,[\xi]} \dd W_r^{1}.  \notag
\end{align*} 
        
\end{lemma}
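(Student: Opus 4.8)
\textbf{Proof proposal for Lemma \ref{lemma42}.}
The plan is to mirror the approach used for the Malliavin derivative $D^0 X$ in Theorem \ref{MD0}, replacing the Cameron--Martin perturbation of $W^0$ by a Fréchet perturbation of the initial datum $\xi \in L^2(\Omega)$, and replacing the conditional Propagation of Chaos argument with the empirical-projection machinery of Section \ref{sec:MeasureDerivatives}. First I would set up the interacting particle system \eqref{IPS} with initial data $(\xi^i)_{1\le i\le N}$ and, viewing it as a standard (lifted) SDE on $(\mathbb R^d)^N$ with coefficients satisfying the global Lipschitz condition (as in the proof of Lemma \ref{wassemp}), invoke the classical differentiability results for SDEs with respect to the initial condition (e.g.\ \cite{imkeller2018differentiability}, or \cite[Theorem 3.1.1]{prevotrockner2007Book} combined with \cite[Theorem 5.29]{book:CarmonaDelarue2018b}) to obtain that $\mathbf{x}^N \mapsto X^{i,N}_\cdot$ is $\bP$-a.s.\ continuously differentiable in $L^2(\mathcal C_T;\bP)$, with Jacobian blocks solving an explicit affine SDE system. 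The key algebraic input is the empirical-projection identity \eqref{prop:DerivativeRelations-Space-2-Lions}: differentiating $b(t,X^{i,N}_t,\bar\mu^N_t)$ in the $j$-th particle produces $\nabla b\,\delta_{ij} + N^{-1}\partial_\mu b(t,X^{i,N}_t,\bar\mu^N_t)(X^{j,N}_t)$, which is precisely the discrete analogue of the drift structure appearing in \eqref{muvariation}.

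Next I would establish a uniform-in-$N$ $L^2$ bound on the particle Jacobians, following verbatim the Itô-formula/Young/Jensen/BDG/Gronwall chain of \textit{Step 1} in the proof of Theorem \ref{MD0}: averaging over the particle index kills the off-diagonal $N^{-1}\sum_k$ terms after one Gronwall application, and reinjecting gives $\sup_N \max_i \bE\|\nabla_{\!(\cdot)} X^{i,N}_\cdot\|^2_{\mathcal C_T}<\infty$ (the inhomogeneity now being $\bE\|I_{d\times d}\|^2_{\rm HS}$ and the $\mathbf\Psi$-type term, both bounded by (${\bf H}_2$) and Theorem \ref{existenceuniqueness}). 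Passing to the limit along the directional perturbation $\xi + \varepsilon v$ (for $v\in\bR^d$, or more generally a perturbation in $L^2(\Omega)$), one identifies a candidate limit $\Gamma^\xi_\cdot(v)$ as the weak-$L^2$ limit of the relevant particle Jacobian combination, exactly as closedness of the Malliavin operator was used via \cite[Lemma 1.2.3]{nualart2006malliavin} in Theorem \ref{MD0}; here the role of that lemma is played by the closedness of the Fréchet-derivative operation on $L^2(\Omega)\to L^2(\mathcal C_T;\bP)$ together with the uniform $L^2$ bound just obtained. The inhomogeneous term $\mathbf{\Psi}_t(X)$ arises from the ``diagonal'' contribution: the perturbation of $\xi^i$ in the $i$-th copy propagates through $\partial_\mu b(\Xi_r)(X^{i,\cdot})\nabla_x X^{i,\cdot}$ etc., which survives the $N\to\infty$ limit as the $z=X^{[\xi]}_r$ evaluation, whereas the collective perturbation of all $\xi^j$, $j\ne i$, produces the $\bE^1[\partial_\mu\,\cdot\,(X^{[\xi]}_r)\Gamma^\xi_r(v)]$ mean-field term by the strong law of large numbers over the exchangeable family $\{(X^k,\Gamma^k)\}_k$.

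Then I would run the analogue of \textit{Step 2} and \textit{Step 3} of Theorem \ref{MD0}: decompose each measure-derivative term in the particle dynamics into four pieces (replace $\bar\mu^N$ by $\mu^\xi$ using joint continuity of $\partial_\mu b,\partial_\mu\sigma^0,\partial_\mu\sigma^1$ and PoC; replace $X^{i,N}$ by $X^{[\xi]}$; replace the particle Jacobian by $\Gamma^\xi$; and apply the SLLN over the exchangeable particle family), show each converges to zero in $L^2$ (conditioning on $\omega^0$ and using the tower property exactly as before), and then match the remaining affine terms $\widehat A,\widehat B,\widehat C$ via Itô isometry, Cauchy--Schwarz, the uniform bounds, and one final Gronwall estimate to conclude that $\Gamma^\xi$ solves \eqref{muvariation} and that $\xi\mapsto X^\xi_\cdot$ is genuinely Fréchet differentiable with $\partial_\mu X^{[\xi]}_t(v)=\Gamma^\xi_t(v)$. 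Well-posedness of \eqref{muvariation} itself is the affine-SDE Banach fixed point argument of \textit{Step 0} with driving process $\mathbf\Psi_t(X)\in L^2$, which requires the good integrability of $\nabla_x X^{v,[\xi]}$ from the first variation lemma and the linear growth hypothesis on $\Phi$-type inputs.

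The main obstacle I expect is the same as in Theorem \ref{MD0} but slightly sharper here: cleanly separating the two sources of initial-datum perturbation in the particle system --- the ``self'' perturbation of $\xi^i$ (which gives the exogenous forcing $\mathbf\Psi$, evaluated at the frozen state $X^{[\xi]}_r$) versus the ``population'' perturbation of the empirical measure through all of $\xi^1,\dots,\xi^N$ (which gives the $\bE^1[\partial_\mu\cdots\Gamma^\xi]$ feedback term) --- and verifying that the $N^{-1}$-weighted diagonal term in the population piece genuinely vanishes while the off-diagonal sum converges by SLLN over the conditionally exchangeable, pairwise conditionally independent family $\{(X^k,\Gamma^k)\}$. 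This bookkeeping, carried out conditionally on the common-noise path $\omega^0$ and then integrated via the tower property, together with the need to only claim convergence possibly along subsequences (sufficient since the limiting affine SDE \eqref{muvariation} has a unique solution), is where the argument is most delicate; everything else is a routine transcription of the four-step scheme already deployed for Theorem \ref{MD0}.
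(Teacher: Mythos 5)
Your route is genuinely different from the paper's. The paper does not prove Lemma \ref{lemma42} via a particle-system limit at all: its proof is a two-line citation, invoking \cite[Lemma 3.5]{MR4278420} (Wang's image-dependent conditional McKean--Vlasov SDEs, with $\Lambda_t^\mu$ there replaced by $[\xi]\circ(\bE[X_t^{x,[\xi]}\mid\cF^0_t])^{-1}$) together with \cite[Lemma 5.27]{book:CarmonaDelarue2018b}, both of which differentiate the lifted map $L^2(\Omega)\ni\xi\mapsto X^\xi$ directly and read off \eqref{muvariation}. You instead propose to re-run the four-step IPS scheme of Theorem \ref{MD0} with the Cameron--Martin perturbation replaced by a perturbation of the initial data, using \eqref{prop:DerivativeRelations-Space-2-Lions} to convert particle Jacobians into Lions derivatives. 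That is a legitimate, self-contained alternative (it is essentially how \cite[Ch.~5]{book:CarmonaDelarue2018a} motivates the $L$-derivative) and it buys methodological uniformity with the rest of the paper; the citation route buys brevity and avoids redoing the exchangeability/LLN estimates.

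Two points in your sketch need repair before it is a proof. First, the bookkeeping of where $\mathbf{\Psi}$ comes from: in the equation for $N\nabla_j X^{i,N}_t$ with $i\neq j$, the forcing is the $k=j$ summand of $N^{-1}\sum_k\partial_\mu(\cdot)(X^{k,N}_t)\,N\nabla_j X^{k,N}_t$, which converges to $\partial_\mu(\cdot)(X^{v,[\xi]}_r)\nabla_x X^{v,[\xi]}_r$ because $\nabla_j X^{j,N}\to\nabla_x X^{v,[\xi]}$ (the first variation of the decoupled SDE \eqref{decoupledsde}); it is not the diagonal ``self'' perturbation of $\xi^i$ in the $i$-th copy as you describe. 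Moreover, to obtain the evaluation at a prescribed $v\in\bR^d$ you must either pin one particle's initial datum at $v$ or argue by density over $\Supp([\xi])$ and joint continuity in $v$ --- this step is absent from your outline. Second, your weak-limit procedure delivers at best Gateaux differentiability of $\xi\mapsto X^\xi$ along directions, whereas the lemma asserts Fr\'echet differentiability; there is no analogue of \cite[Lemma 1.2.3]{nualart2006malliavin} for Fr\'echet derivatives, so ``closedness of the Fr\'echet-derivative operation'' cannot simply be invoked, and the upgrade requires continuity of the candidate derivative in $(\xi,v)$ (which is exactly what the cited \cite[Lemma 5.27]{book:CarmonaDelarue2018b} supplies). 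With those two repairs your argument should go through.
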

\begin{proof}
    See \cite[Lemma 3.5]{MR4278420} and replace their $\Lambda_t^\mu$ by $\Tilde{\Lambda}_t^\xi := [\xi]\circ (\bE\big[X_t^{x,[\xi]}|\cF^0_t \big])^{-1}$, yielding their ``\textit{image-dependent SDE}''. See also \cite[Lemma 5.27]{book:CarmonaDelarue2018b}.   
\end{proof}

We now state the below integration by parts formula that is not covered by \cite{MR4278420}. 
\begin{lemma} 
  Assume   that $({\bf H}_1)$ and $({\bf H}_2)$ hold. Suppose further that 
    there exists some $\delta>0$ such that $\zeta^\top((\sigma^0(\sigma^0)^\top)(t,x,\mu))\zeta \geq \delta |\zeta|^2$ for all $\zeta\in\bR^d$ and $(t,x,\mu)\in [0,T] \times\bR^d \times \cP_2(\bR^d)$. Then,  there exists an $h\in \mathcal H$ (given explicitly in the proof) such that the following integration by parts formula holds: for $v \in \bR^d$ and $f \in C_b^2(\bR^d)$, 
    \begin{align*}
        \bE^1
    \Big[ \partial_\mu (
    f(X_t^{[\xi]}))(v)\Big]
    =
    \frac1t\bE^1\Big[f(X_t^{[\xi]})\delta^0\big(r \mapsto h(r,v)^\top\big)\
    \Big].
    \end{align*}
     where $\delta^0$ denotes the Skorokhod operator with respect to the common Brownian motion; the $L^2$-adjoint to the Malliavin divergence.
\end{lemma}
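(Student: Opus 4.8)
The plan is to mimic the proof of \cite[Proposition 4.1]{Crisan2018} — as was done above for the spatial integration by parts formula — but differentiating in the measure variable and producing a Cameron--Martin direction along the \emph{common} noise $W^0$; the ellipticity of $\sigma^0(\sigma^0)^\top$ here plays the role that the ellipticity of $\sigma^1(\sigma^1)^\top$ played in the spatial formula. First, by Lemma~\ref{lemma42} and the chain rule for $L$-derivatives, $\partial_\mu\big(f(X_t^{[\xi]})\big)(v)=\nabla f(X_t^{[\xi]})\,\Gamma_t^\xi(v)$ with $\Gamma^\xi(v)$ solving the affine SDE \eqref{muvariation}. The aim is to construct an $\bF$-adapted, $\cH$-valued process $h(\cdot,v)$ — depending only on the coefficients, their spatial and measure derivatives and $(\sigma^0(\sigma^0)^\top)^{-1}$ — such that
\[
 \bE^1\Big[\nabla f(X_t^{[\xi]})\,\Gamma_t^\xi(v)\Big]
 \;=\; \tfrac1t\,\bE^1\Big[\big\langle D^0\big(f(X_t^{[\xi]})\big),\,h(\cdot,v)\big\rangle_{\cH}\Big],
\]
after which the integration-by-parts (duality) formula recalled in Section~2 for the $W^0$-Malliavin calculus yields the claim with $\delta^0(r\mapsto h(r,v)^\top)=\int_0^t h'(r,v)^\top\,\dd W^0_r$.

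The core of the argument is a variation-of-constants representation for $\Gamma^\xi(v)$ in \eqref{muvariation}. The SDEs \eqref{muvariation}, \eqref{commonnoisemalliavin} (Theorem~\ref{MD0}) and \eqref{xvariation} (the spatial first variation $w^v=(\nabla X_\cdot^{\cdot,[\xi]})(v)$ of the decoupled equation \eqref{decoupledsde}) are all driven by the same mean-field linear operator and differ only in their inhomogeneity: the single initial displacement $\sigma^0(s,X_s,\mu_s)$ at time $s$ for $D^0_sX$, the identity at time $s$ for $w^v$, and the semimartingale forcing $\dd\mathbf{\Psi}_r(X)$ for $\Gamma^\xi(v)$, whose coefficients are the Lions derivatives of $b,\sigma^0,\sigma^1$ evaluated along the decoupled flow $X^{v,[\xi]}$ and weighted by $w^v$. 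Superposing over the forcing and using uniqueness for this affine system (\emph{Step~0} of the proof of Theorem~\ref{MD0}), one expresses $\Gamma_t^\xi(v)$ through $D^0X$, $w^v$ and the data of $\mathbf{\Psi}$. This is where ellipticity enters: $(\sigma^0(\sigma^0)^\top)(r,X_r,\mu_r)$ is invertible with uniformly bounded inverse, so $(\sigma^0)^\top(\sigma^0(\sigma^0)^\top)^{-1}(r,X_r,\mu_r)$ is a bounded right inverse of $\sigma^0(r,X_r,\mu_r)$; this is exactly what allows one to solve for the Cameron--Martin integrand so that pairing it against $D^0X_t$ reproduces the $\dd r$- and $\dd W^0$-parts of the forcing contributed to $\Gamma_t^\xi(v)$, the $\dd W^0$-part being precisely of the form produced by Malliavin-differentiating a $W^0$-stochastic integral.

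It remains to neutralise the $\dd W^1$-part of $\mathbf{\Psi}$. Since $W^1\perp W^0$ and the $\dd W^1$-integral is a $\bP^1$-martingale, one checks — via the tower property $\bE[\,\cdot\,]=\bE^0\big[\bE^1[\,\cdot\,]\big]$ and, where needed, an auxiliary idiosyncratic Skorokhod integration by parts — that this contribution drops out of $\bE^1\big[\nabla f(X_t^{[\xi]})\Gamma_t^\xi(v)\big]$. Collecting terms and rescaling by $1/t$ (the Cameron--Martin direction is the time average of the elliptic integrand over $[0,t]$, exactly as in the classical Bismut computation) produces $h(\cdot,v)$; its $\cH$-membership, i.e.\ $\bE\int_0^T|h'(r,v)|^2\,\dd r<\infty$, follows from $({\bf H}_2)$, the ellipticity bound, the second-moment estimate of Theorem~\ref{MD0} and the standard $L^2$-bound for \eqref{xvariation}.

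I expect the representation of the last two paragraphs to be the principal obstacle: because \eqref{muvariation} and \eqref{commonnoisemalliavin} are \emph{mean-field} linear (they involve $\bE^1$ of the unknown), there is no naive multiplicative fundamental solution, and the three distinct drift/noise components of the forcing $\mathbf{\Psi}$ must be reconciled with a single $\delta^0$-term. The cleanest rigorous route is to prove the identity first for the interacting particle system \eqref{IPS}: there, via the empirical-projection identity \eqref{prop:DerivativeRelations-Space-2-Lions}, the measure derivative reduces to a genuine spatial (Bismut-type) integration by parts for the $Nd$-dimensional classical SDE, covered by \cite{imkeller2018differentiability} and classical Malliavin calculus; one then passes to the particle limit using the conditional Propagation of Chaos \eqref{conditionalpoc} and the closedness of the Malliavin operator \cite[Lemma 1.2.3]{nualart2006malliavin}, precisely along the lines of \emph{Steps~1--3} of the proof of Theorem~\ref{MD0}.
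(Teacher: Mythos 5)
Your overall architecture matches the paper's: compare the affine mean-field SDEs \eqref{muvariation} and \eqref{PP}/\eqref{commonnoisemalliavin}, use the ellipticity of $\sigma^0(\sigma^0)^\top$ to invert $\sigma^0$ via $(\sigma^0)^\top(\sigma^0(\sigma^0)^\top)^{-1}$, and conclude by Malliavin duality for the $W^0$-calculus with the $1/t$ time-average. However, you stop exactly where the proof actually happens. The lemma promises an \emph{explicit} $h$, and the paper supplies it: the representation $\partial_\mu X_t^{[\xi]}(v) = D_r^0 X_t^{[\xi]}\, h(r,v)$ for each $r\in[0,t]$, with
\begin{align*}
h(r,v) = \big((\sigma^0)^\top(\sigma^0(\sigma^0)^\top)^{-1}\big)(r,X_r^{[\xi]},\mu_r^\xi)\big\{\mathbf{\Psi}_t(X)-\mathbf{\Psi}_r(X)+\partial_\mu X_r^{[\xi]}(v)\big\},
\end{align*}
which is then averaged over $r\in[0,t]$ before applying $D^0_r f(X_t^{[\xi]}) = \nabla f(X_t^{[\xi]})D^0_r X_t^{[\xi]}$ and the duality $\bE^1[D^0_h F]=\bE^1[F\delta^0(h)]$. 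Your proposal never produces this object; you flag the representation as "the principal obstacle" and defer it to an unexecuted IPS-plus-limit argument, so the central step of the proof is missing.

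Two of your specific claims also conflict with what the construction delivers. First, you require $h(\cdot,v)$ to be $\bF$-adapted and write $\delta^0(r\mapsto h(r,v)^\top)=\int_0^t h'(r,v)^\top\,\dd W^0_r$; but the actual $h(r,v)$ contains $\mathbf{\Psi}_t(X)$, which is $\cF_t$-measurable, so $h$ is anticipative — this is precisely why the statement (and the paper's footnote) invoke the Skorokhod operator rather than an Itô integral, and why the relation between $\partial_\mu X_t^{[\xi]}(v)$ and $\Gamma_t^\xi(v)$ cannot be obtained by moving the multiplier inside the stochastic integrals and appealing to strong uniqueness. Second, your claim that the $\dd W^1$-component of $\mathbf{\Psi}$ "drops out" of $\bE^1\big[\nabla f(X_t^{[\xi]})\Gamma_t^\xi(v)\big]$ is unjustified: $\Gamma_t^\xi(v)$ depends on the whole forcing path through the affine dynamics and is multiplied by $\nabla f(X_t^{[\xi]})$, which is correlated with $W^1$, so the martingale property of the forcing alone does not kill that term. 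The paper does not discard it — the full increment $\mathbf{\Psi}_t(X)-\mathbf{\Psi}_r(X)$, including the $\dd W^1$ integral, is absorbed into $h(r,v)$.
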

\begin{proof}
    Compare \eqref{PP} and \eqref{muvariation}: these equations are driven by the same dynamics but with different dimensions and initial conditions. Arguing as in  \cite{Crisan2018}, we have,  with $\mathbf{\Psi}_t(X)$ as above, the $\bP $-a.s.~representation\footnote{ From \eqref{muvariation}, it is easy to see that $\Gamma_\cdot^\xi(v)$ can be written with the help of $\Psi_\cdot(X)$
\begin{align*}
\Gamma_t^\xi(v)=\Gamma_r^\xi(v)+\Psi_t(X) - \Psi_r(X) +\int_r^t \cdots \textrm{``extra terms''}\cdots .
\end{align*}
For the relationship between $\partial_\mu X_t^\xi(v)$ and $\Gamma_t^\xi(v)$, we formally multiply $( (\sigma^0(\sigma^0)^\top)^{-1})(r)\big(\Gamma_r^\xi(v)+\Psi_t(X) - \Psi_r(X)\big)$ on both sides of \eqref{PP} but as the term is not adapted it cannot be moved inside the stochastic integrals. Thus, methodologically, the relations between $\partial_\mu X_t^\xi(v)$ and $\Gamma_t^\xi(v)$ does not follow via  strong uniqueness. 
}
 \begin{align*}
     \partial_\mu X_t^{[\xi]}(v) 
     = 
     D_r^0 X_t^{[\xi]}  h(r,v),
 \end{align*}
 where
 \begin{align*}
   h(r,v): = \big((\sigma^0)^\top (\sigma^0(\sigma^0)^\top)^{-1}\big)(r,X_r^{[\xi]},\mu_r^\xi)\big\{\mathbf{\Psi}_t(X)- \mathbf{\Psi}_r(X)+\partial_\mu X_r^{[\xi]}(v)\big\}.  
 \end{align*} 
Uniform boundedness of $\partial_\mu \big( f(X_t^{[\xi]})\big)$ follows from \cite[Theo.~5.29]{book:CarmonaDelarue2018b}. Thus,  Malliavin integration by parts implies that for $v \in \bR^d$
\begin{align*} 
    \bE^1
    \Big[\partial_\mu \big( f(X_t^{[\xi]})\big)(v)\Big] 
    &=\frac{1}{t}\bE^1 \Big[\int_0^tD_r^0 X_t^{[\xi]} h(r)(\nabla f)(X_t^{[\xi]})\dd r\Big]\notag
    \\
    &
    = \frac{1}{t}\bE^1 \Big[\int_0^t  h(r)^\top D_r^0f(X_t^{[\xi]})\dd r\Big]
    = \frac{1}{t}\bE^1\Big[f(X_t^{[\xi]})\delta^0\big(r \mapsto h(r,v)^\top\big)\Big].
\end{align*}
\end{proof}


\end{section}


\end{document}